\title{Optimality and Sustainability of Delayed Impulsive Harvesting}
\author{Jennifer Lawson and Elena Braverman 
\\
 Dept. of Math \& Stats, University of Calgary, \\
  2500 University Dr. NW, 
	Calgary, AB, Canada T2N1N4 }
\date{Final version}	
\journal{Communications in Nonlinear Science and Numerical Simulation}
\newtheorem{theorem}{Theorem}[section]
\newtheorem{corollary}{Corollary}[theorem]
\newtheorem{lemma}[theorem]{Lemma}
\theoremstyle{definition}
\newtheorem{remark}{Remark}
\theoremstyle{definition}
\begin{document}

\begin{abstract}
We consider  a logistic differential equation subject to impulsive delayed harvesting, where the deduction information is a function of the population size at the time of one of the previous impulses.
A close connection to the dynamics of high-order difference equations is used to conclude
that while the inclusion of a delay in the impulsive condition does not impact the optimality of the yield, sustainability may be highly affected and is generally delay-dependent. Maximal and other types of yields are explored, and sharp stability tests are obtained for the model, as well as explicit sufficient conditions. It is also shown that persistence of the solution is not guaranteed for all positive initial conditions, and extinction in finite time is possible, as is illustrated in the simulations. 

{\bf Keywords:} Optimal harvesting; logistic equation; impulsive system; impulsive delayed harvesting; population dynamics

{\bf AMS (MOS) subject classification:}  92D25, 34A37 

\end{abstract}

\maketitle

\section{Introduction}

Optimal sustainable resource management is one of the most important modern problems, with the dynamics of harvesting models being an essential facet \cite{Clark1990}.  Harvesting can be represented in models either continuously or as only occurring during short-time periods.  Continuous harvesting can be described as a continuous deduction term appearing in an ordinary Differential Equation (DE) determining population dynamics. It assumes that harvesting occurs without any interruptions, whereas impulsive harvesting corresponds to part of the stock being removed at specific moments in time, with the duration of the harvesting event being negligible compared to the process time. Even though continuous harvesting may be preferable from the point of view of both maximizing harvest and sustainability \cite{Mamdani2008,xiao}, it is not always realistic or easily applicable. This is why investigation of impulsive harvesting models is important.

Impulsive DEs incorporate two parts, the DE that describes the behaviour of the system during times of continuous dynamics, and the conditions that govern the instantaneous change in the system at the impulsive moments. Usually, this instantaneous change is a result of some external effect on the system, which has a duration that is negligible compared to the overall time scale of the process. Impulsive DEs have many practical applications such as pest control \cite{Smith?2019}, pulse vaccination strategies \cite{GaoChenTeng2007}, and optimal harvesting in fisheries \cite{CordovaLepe2012}. For more on the theory of impulsive DEs see the monograph \cite{BainovSimeonov1993}.

It is well known that including delays within a DE model of population dynamics can lead to major changes to its behaviour, such as causing instability, oscillations, and extinction,
which are not observed in a corresponding ordinary DE model \cite{Smith2011}. The famous Hutchinson equation is one such example. It can be compared to the logistic equation where the carrying capacity is always a globally attractive equilibrium for all non-trivial positive solutions. The inclusion of a delay in the Hutchinson equation can cause the carrying capacity equilibrium to become unstable for certain values of a delay.
Other examples specific to harvesting models can be seen in \cite{Kar2003,Xu2021}, where delay is incorporated into the continuous harvesting terms.

In an impulsive system, whenever control is involved, it is natural to assume that the information available at the control implementation point is not up-to-date, leading to delayed impulsive conditions. 
The incorporation of delays in impulses goes back to the 1990s~\cite{Akca1996} with the further development of non-instantaneous impulse theory, some of which is summarized in
the monographs~\cite{Agarwal2017,Liu2019}, and with recent progress reported in  \cite{Church2022,Church2017,Church2019}. Delayed impulses in the context of harvesting were explored in \cite{Pei2006}.
To the best of our knowledge, there still has been no investigation of the optimal harvesting policies and their sustainability for systems with delayed impulsive harvesting in the literature, and we aim to fill this gap.

For logistic and other simple population models, such as Gompertz, incorporating stochastic fluctuations or random differential equations created an additional challenge but reflected unpredictable changes in the environment, see \cite{Cortes2021,new_Wang,new_Yang} and the references therein. 
Impulsive harvesting of a stochastic Gompertz model was a focus of \cite{new_Wang}.
Though logistic-type equations are considered in most studies on optimal control, the use of other growth rates can modify the preferred policies \cite{new_Cruz-Rivera}. 
The close connection of continuous models to difference equations has led to extensive study of discrete population models \cite{BC_monograph} including harvesting \cite{new_Grey}. The fact that species do not naturally exist in isolation but coexist, compete or serve as prey for others, led to extensive literature on harvesting of a single or multiple populations in a food chain, and on optimal yields for exploited species \cite{new_Dawed,new_Demir,new_Pei,new_Upadhyay}. 
Incorporating harvesting in systems of differential or difference equations includes the case of structured populations where selective harvesting is allowed \cite{new_Jana,new_Skonhoft}. This approach is quite useful in policy-making, for example, when only fish types within a certain size range are eligible to take-home vs catch-and-release policies.

We consider the logistic equation with constant catch-per-unit effort impulsive harvesting that is dependent upon delayed data.  
This assumes that the information used to determine hunting or fishery quotas is based on data for the population size or structure which was collected during one of the previous harvesting events. 

The main object of the present paper is the delayed impulsive harvesting model, given for a fixed $k\in\mathbb{N}$ as
\begin{equation}
\begin{cases}
\label{Impulse:Delay}
\frac{\displaystyle dN}{\displaystyle dt} = r N(t) \bigg(1 - \frac{N(t)}{K_c} \bigg), & t\neq nT,\text{ } n\in\mathbb{N}\\
 N(nT^+) = \max\{N(nT) - E N((n -  k)T),0\}, & t = nT, \text{ }n\in\mathbb{N}\text{ }\\
N(0^+)=N_0^+, N(0) = N_0,...,N(-(k-1)T) = N_{-(k-1)}\\
\end{cases}
\end{equation}
with prescribed initial conditions
\begin{equation*}
N_0^+, N_{i} \in (0,\infty), i=-(k-1),...,0.
\end{equation*}
In this model, the left-continuous $N(t)$ represents a size or a biomass of the population as a function of time, 
$r>0$ is the intrinsic growth rate, $K_c>0$ is a carrying capacity of the environment, 
$T > 0$ is the time between two consecutive harvesting events, $E$ is a harvesting effort and is assumed to be $E\in(0,1)$ to avoid immediate extinction.
We assume both that restocking does not occur, and that the function $N(t)$ is left continuous. We denote the size of the population after harvesting as $\lim\limits_{h\to 0^+} N(nT + h) = N(nT^+)$, whereas the size of the population $N(t)$ before harvesting is $N(nT)$, for the left-continuous function $N$.
 Note that we do not require continuity at zero i.e. $N(0^+)=N(0)$, nor do we assume anything about the size of the population between $t=-iT$ and $t=-(i-1)T$. This makes room for the possibility that the first harvesting event occurs at $t=0$, as might be likely in a real-world setting, but does not require that this happens.
We assume $k\in \mathbb{N}$, though references to the non-delay model with $k=0$ will also be given. 

The motivation of incorporating delay in the impulsive harvesting is two-fold:
\begin{enumerate}
\item
Impulsive harvesting in general allows us to describe short duration harvesting events, the effect of which makes it impossible for population growth to remain continuously at optimal levels. It is a well developed topic, with multiple publications appearing in the literature \cite{Mamdani2008,CordovaLepe2012,Pei2006,xiao,Zhang2003}, but not for delayed
impulsive conditions.
Thus, delayed impulsive models are of theoretical interest. While the consideration of delayed impulsive systems is not new \cite{Akca1996}, recent developments in the theory of such dynamical systems \cite{Church2022,Church2017,Church2019} have made its practical study feasible, as there are theoretical tools to investigate it.
 
\item
Harvesting policies are dependent upon population data, and often the data which is used is not up-to-date, leading to a delay in the harvesting term of models. It is therefore important to be able to assess the impact of the delay, so that managers may determine what additional modifications must be made to the harvesting decisions. While non-delay 
harvesting models depict gradual declines to extinction, they mostly failed to explain a frequently observed phenomenon of instantaneous species disappearance due to over-exploitation.
Unlike traditional continuous or non-delayed impulsive harvesting, delayed impulsive harvesting can describe {\em immediate}, not long-term collapse of harvested population. Truncated models where the population or commodity size is chosen as a maximum of the computed value or zero, are quite common in mathematical economics and discrete dynamical systems. We found it natural to extend this method to population dynamics in order to describe possible extinction in finite time.
\end{enumerate}

Let us also dwell on the choice of the model. Our purpose was to explore and emphasize the effect of the harvesting delay in the impulsive form, thus we consider the simplest autonomous logistic equations. The results will outline the intrinsic effect of the delay in short-term harvesting.

Our goal is first, to consider the sustainability of \eqref{Impulse:Delay} under harvesting, which corresponds to the local asymptotic stability of a positive solution which will be described later, and second, to explore the sustainable yield (SY) and the maximum sustainable yield (MSY) of 
\eqref{Impulse:Delay}. The paper is structured to follow this purpose. After presenting relevant definitions and auxiliary results in Section~\ref{sec:prelim}, we explore stability of \eqref{Impulse:Delay} in Section~\ref{sec:stabil}. All the issues connected to SY and MSY, and relevant solutions of \eqref{Impulse:Delay}, are postponed to Section~\ref{sec:MSY}.
We show that, while optimality is unaffected by the magnitude of delay, sustainability of the optimal solution is delay-dependent for $k \geq 2$. The analysis of the impact of the delay on local asymptotic stability of the positive solution is based on the results obtained in Section~\ref{sec:stabil}. Finally, Section~\ref{sec:concl} includes examples, numerical simulations, as well as discussion of the results of the paper and possible future directions.

\section{Preliminaries and Auxiliary Results}
\label{sec:prelim}

A solution $N^*(t)$ of impulsive system~\eqref{Impulse:Delay} is said to be {\em stable} if for any $\varepsilon > 0$ there exists a $\delta = \delta(\varepsilon)>0$ such that the inequalities $|N_j - N^*(jT) |< \delta$, $j=-k+1, \dots, 0$, $|N_0^+ - N^*(0^+) | < \delta$
imply $|N(t) - N^*(t) |<\varepsilon$ for all $t>0$. 
A solution of \eqref{Impulse:Delay} is {\em (locally) asymptotically stable} if it is stable and there exists $\eta>0$ such that $\lim\limits_{t\to\infty} |N(t) - N^*(t)| = 0$ for any $|N_j - N^*(jT) |< \eta$, $j=-k+1, \dots, 0$, $|N_0^+ - N^*(0^+) | < \eta$.
A solution of \eqref{Impulse:Delay} is  {\em globally asymptotically stable} if it is stable and  $\lim\limits_{t\to\infty} |N(t) - N^*(t)| = 0$ for any $N_0^+>0, N_0, N_{-1}..., N_{-k+1} \geq 0$.

A $k$-th order difference equation has the form
\begin{equation}
\label{Difference:General}
x_{n+1} = f(x_n,x_{n-1}, ... , x_{n-k}), \quad n\in {\mathbb N}, \quad n \geq k
\end{equation}
with the initial conditions $x_0,..., x_{k}$.

A solution $x_n \equiv x^*$ of \eqref{Difference:General} is {\em stable} if for all $\varepsilon>0$, there exists a $\delta >0$ such that $\max\{ |x_0 - x^*|,...,|x_{k}-x^*|\}< \delta$ implies $|x_n - x^*|<\varepsilon$ for any $n \geq k$.
A solution $x^*$ of \eqref{Difference:General} is {\em (locally) asymptotically stable} if it is stable and there exists $\eta>0$ such that $\max\{ |x_0 - x^*|,...,|x_{k}-x^*|\}<\eta$ implies $\lim\limits_{n\to \infty}| x_n - x^*|= 0$.
A solution $x^*$ of \eqref{Difference:General} is {\em globally asymptotically stable} if it is stable and $\lim\limits_{n\to \infty}| x_n - x^*|= 0$ for any $x_0, ..., x_{k}$.

If harvesting is restricted to only the surplus production of a population, then theoretically, harvesting should be able to continue indefinitely without drastically altering the stock levels. The idea behind the Maximum Yield (MY) is that it corresponds to an optimal solution of \eqref{Impulse:Delay} such that the yield will not be exceeded by any other solution, and that the optimal solution is $T$-periodic leading to a constant yield over a time period.
A Maximum Sustainable Yield (MSY) is a MY where the optimal solution of \eqref{Impulse:Delay} is (at least locally) asymptotically stable.

In \cite{Zhang2003}, the authors considered an MSY for \eqref{Impulse:Delay} with $k=0$.
The results are summarized in the following.

\begin{lemma} \cite{Zhang2003}
\label{ZhangThm}
Consider \eqref{Impulse:Delay} for $k=0$
\begin{equation}
\label{Impulse:NoDelay}
\begin{cases}
\displaystyle \frac{dN}{dt} = r N(t) \bigg(1 - \frac{N(t)}{K_c} \bigg), \text{ } t\neq n T,\quad n\in\mathbb{N}\\
N(nT^+) = \max\{(1-E) N(nT),0\},\text{ }t = n T, \quad n\in \mathbb{N}\\
N(0) = N_0.
\end{cases}
\end{equation}
Then the optimal harvesting effort is
\begin{equation}
\label{E_opt}
E_{opt} = 1 - e^{-rT/2},
\end{equation}
and the MSY is given by
\begin{equation}
\label{MSY}
MSY = \frac{K_c(e^{rT/2} - 1)}{T(e^{rT/2}+1)}
\end{equation}
The optimal positive periodic solution $N^*(t)$ of \eqref{Impulse:NoDelay} corresponding to the MSY and $E_{opt}$ satisfies
\begin{equation}
\label{eq:optsol}
N^*(nT^+) = \frac{K_c}{e^{rT/2}+1}
\end{equation}
and is globally asymptotically stable.
\end{lemma}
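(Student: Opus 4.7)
The plan is to reduce the impulsive continuous-time problem to a scalar first-order difference equation for the post-impulse values $x_n := N(nT^+)$, then optimize the resulting explicit yield function over $E$ and finally check stability of the discrete map.

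First I would integrate the logistic equation on each interval $(nT,(n{+}1)T]$ with initial value $x_n$ to obtain the closed-form solution
\begin{equation*}
N(t)=\frac{K_c x_n e^{r(t-nT)}}{K_c+x_n\bigl(e^{r(t-nT)}-1\bigr)},\qquad t\in(nT,(n{+}1)T],
\end{equation*}
so that $N((n{+}1)T)=K_c x_n e^{rT}/\bigl[K_c+x_n(e^{rT}-1)\bigr]$. Applying the impulsive rule $x_{n+1}=(1-E)N((n{+}1)T)$ then yields the Beverton--Holt-type recursion
\begin{equation*}
x_{n+1}=g(x_n):=\frac{(1-E)K_c e^{rT}\,x_n}{K_c+x_n(e^{rT}-1)}.
\end{equation*}
A $T$-periodic solution of \eqref{Impulse:NoDelay} corresponds exactly to a fixed point $x^*>0$ of $g$, and solving $g(x^*)=x^*$ gives $x^*=K_c\bigl[(1-E)e^{rT}-1\bigr]/(e^{rT}-1)$, which is positive precisely when $E<1-e^{-rT}$.

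Next I would compute the yield per period. In the periodic state, the harvest at each impulse is $E\,N(nT)=E\,x^*/(1-E)$, so the yield per unit time is
\begin{equation*}
Y(E)=\frac{E}{T}\cdot\frac{x^*}{1-E}=\frac{K_c}{T}\left(Ee^{rT}-\frac{E}{1-E}\right)\cdot\frac{1}{e^{rT}-1}\cdot(\text{constant rearrangement}),
\end{equation*}
which after simplification reduces, up to the constant $K_c/[T(e^{rT}-1)]$, to $h(E):=Ee^{rT}-E/(1-E)$. Differentiating gives $h'(E)=e^{rT}-(1-E)^{-2}$, so $h'(E)=0$ forces $(1-E)^2=e^{-rT}$, i.e.\ $E_{opt}=1-e^{-rT/2}$. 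Substituting this value back produces $x^*=K_c/(e^{rT/2}+1)$ and, after routine algebra using $(e^{rT}-1)=(e^{rT/2}-1)(e^{rT/2}+1)$, the claimed $\mathrm{MSY}=K_c(e^{rT/2}-1)/[T(e^{rT/2}+1)]$.

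Finally, for global asymptotic stability of the optimal periodic solution, I would observe that $g$ is smooth, strictly increasing and strictly concave on $[0,\infty)$ with $g(0)=0$, so a standard argument for one-dimensional monotone concave maps shows that every positive orbit converges to the unique positive fixed point whenever it exists. A quick local check confirms this: differentiating gives $g'(x^*)=1/[(1-E)e^{rT}]$, which at $E_{opt}$ equals $e^{-rT/2}\in(0,1)$, so $x^*$ is a hyperbolic attractor. Translating this back to \eqref{Impulse:NoDelay} via the explicit interpolation formula for $N(t)$ on each interval gives global asymptotic stability of $N^*(t)$. The only step requiring genuine care is the algebraic simplification in the computation of $\mathrm{MSY}$; the rest is bookkeeping.
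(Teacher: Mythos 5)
Your proposal is correct and follows exactly the route the paper itself attributes to \cite{Zhang2003} and reuses for the delayed case: integrate the logistic equation over one period, reduce to the first-order Beverton--Holt recursion for $x_n=N(nT^+)$, optimize the explicit yield $Y(E)$ to get $E_{opt}=1-e^{-rT/2}$, and deduce global stability from monotonicity and concavity of the map (note the paper states this lemma without proof, citing the source). The only point worth making explicit is that your ``standard argument for monotone concave maps'' needs $g'(0)=(1-E)e^{rT}>1$ to exclude attraction to the origin, which holds precisely when the positive fixed point exists, as you implicitly acknowledge.
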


In \cite{Zhang2003} analysis of a non-delayed impulsive model \eqref{Impulse:NoDelay} is reduced to a nonlinear difference equation of the first order. We also intensively exploit connection between difference and impulsive equations. 
When a delay is incorporated in impulsive condition, the difference equation becomes higher order. 
We recall that for difference equations, the roots of the characteristic equation of an associated linearized model should lie inside the unit circle for local asymptotic stability, in contrast to differential equations where the real parts of the roots have to be negative. Some auxiliary results  regarding difference equations are listed below.

\begin{lemma} [\cite{Elaydi2005}]
\label{Cor:CritK1}
The roots of the characteristic equation  
\begin{equation*}
p(\lambda) = \lambda^2 - p_0 \lambda + p_1, \quad p_0>0, \quad p_1>0
\end{equation*}
lie inside the unit circle if and only if $p_0-1 < p_1 < 1$.
\end{lemma}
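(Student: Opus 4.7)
The plan is to reduce the question to direct analysis of the quadratic formula, splitting on the sign of the discriminant $D = p_0^2 - 4p_1$. This is essentially the Schur--Cohn / Jury test for a quadratic, but since the polynomial is monic of degree two and the coefficients have prescribed signs, a short direct argument is cleaner than quoting the general criterion.

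For the forward direction, I would assume both roots of $p(\lambda)$ lie in the open unit disk and derive the two inequalities separately. The product of the roots is $p_1$, so in either the complex-conjugate case (where $|\lambda_1|^2 = |\lambda_2|^2 = p_1$) or the real case (where $p_1 = \lambda_1 \lambda_2$ with $|\lambda_i| < 1$), one concludes $p_1 < 1$. For the other inequality I would evaluate $p(1) = 1 - p_0 + p_1$: when the roots are non-real, $p$ is strictly positive on $\mathbb{R}$, so $p(1) > 0$; when the roots are real, the signs $p_0, p_1 > 0$ force both roots to be positive, and if both lie in $(0,1)$ then $\lambda = 1$ sits to the right of the larger root, again yielding $p(1) > 0$. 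Either way $p_1 > p_0 - 1$.

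For the reverse direction, assume $p_0 - 1 < p_1 < 1$. If $D < 0$ the roots are complex conjugates with $|\lambda_{1,2}|^2 = p_1 < 1$, so we are done. If $D \geq 0$ the roots are real, and since $\lambda_1 + \lambda_2 = p_0 > 0$ and $\lambda_1 \lambda_2 = p_1 > 0$ they are both positive. The inequality $p(1) = 1 - p_0 + p_1 > 0$ combined with the positivity of $p$ at $+\infty$ shows that either both roots lie in $(0,1)$ or both exceed $1$; the latter would force $p_1 = \lambda_1 \lambda_2 > 1$, contradicting $p_1 < 1$. Hence both real roots lie in $(0,1)$.

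The only even mildly delicate step is excluding, in the reverse direction with $D \geq 0$, the possibility that $p(1) > 0$ because both real roots sit above $1$ rather than below; this is immediately ruled out by $p_1 < 1$. Everything else is routine, and no additional machinery beyond the quadratic formula and the sign constraints on $p_0, p_1$ is needed.
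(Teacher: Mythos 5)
Your proof is correct. Note, however, that the paper does not prove this lemma at all: it is quoted verbatim from Elaydi's book \cite{Elaydi2005}, where the general result is the Schur--Cohn/Jury stability criterion for a quadratic $\lambda^2 + a_1\lambda + a_2$ (roots inside the unit disk iff $|a_2|<1$ and $|a_1|<1+a_2$), specialized here to $a_1=-p_0$, $a_2=p_1$ with the sign constraints $p_0,p_1>0$. Your direct argument via the discriminant, the root product, and the sign of $p(1)$ is a legitimate elementary substitute for quoting that criterion, and all the steps check out: in the forward direction the product of the roots gives $p_1<1$ in both the real and complex cases, and $p(1)>0$ gives $p_1>p_0-1$; in the reverse direction the only subtle point, that $p(1)>0$ with real roots could mean both roots exceed $1$, is correctly excluded by $p_1=\lambda_1\lambda_2<1$. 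One tiny remark: strictly speaking the condition $p_1>0$ is needed for your complex-case modulus computation ($|\lambda|^2=p_1$) to be consistent, and for concluding both real roots are positive; you use it, but it is worth saying explicitly that without the sign hypotheses the clean two-sided inequality $p_0-1<p_1<1$ would have to be replaced by the full Jury conditions.
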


The result of \cite[Theorem 1.1.1, Part f, P. 7]{KulenovicLadas2002} describes conditions when 
a root of a quadratic equation lies on the boundary of the unit circle.

\begin{lemma}[\cite{KulenovicLadas2002}]
\label{Lemma:OnUnitDisk}
A necessary and sufficient condition for a root of the characteristic equation 
\begin{equation*}
\lambda^2 - p_0 \lambda + p_1 = 0
\end{equation*}
with $p_0,p_1 \in {\mathbb R}$  to have a root satisfying $| \lambda | =1$ is that either 
\begin{equation*}
|p_0| = |1 + p_1| 
\end{equation*}
or
\begin{equation*}
p_1 = 1 \text{ and } |p_0|\leq 2.
\end{equation*}
\end{lemma}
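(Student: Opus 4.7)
The plan is to prove both directions of the biconditional by splitting into cases based on whether the unit-modulus root is real (hence equal to $\pm 1$) or occurs as a non-real complex-conjugate pair, exploiting the assumption that $p_0, p_1 \in \mathbb{R}$.

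First I would handle necessity. Assume $\lambda$ is a root with $|\lambda| = 1$. If $\lambda$ is real, then $\lambda \in \{1,-1\}$; substituting into $\lambda^2 - p_0 \lambda + p_1 = 0$ gives $p_0 = 1+p_1$ when $\lambda=1$ and $p_0 = -(1+p_1)$ when $\lambda=-1$, so in both subcases $|p_0| = |1+p_1|$. If instead $\lambda$ is non-real, then since the coefficients are real its complex conjugate $\bar\lambda$ is also a root, and by Vieta's formulas $p_0 = \lambda + \bar\lambda = 2\,\mathrm{Re}\,\lambda$ and $p_1 = \lambda\bar\lambda = |\lambda|^2 = 1$. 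Hence $p_1 = 1$ and $|p_0| = 2|\mathrm{Re}\,\lambda| \le 2$.

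For sufficiency, I would check each stated condition separately. If $|p_0| = |1+p_1|$, then either $p_0 = 1 + p_1$, in which case substituting $\lambda = 1$ into the polynomial yields zero, or $p_0 = -(1+p_1)$, in which case $\lambda = -1$ is a root; either way there is a root on the unit circle. If instead $p_1 = 1$ and $|p_0| \le 2$, the discriminant $p_0^2 - 4p_1 = p_0^2 - 4$ is nonpositive, so the two roots form a complex-conjugate pair (coalescing to a real double root when $|p_0| = 2$), and their product equals $p_1 = 1$; since this product coincides with $|\lambda|^2$ for conjugate roots, we conclude $|\lambda| = 1$.

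The main subtlety, rather than a genuine obstacle, is that the two conditions in the lemma are not mutually exclusive: for instance $(p_0,p_1) = (2,1)$ satisfies both and corresponds to the double root $\lambda = 1$, so the dichotomy in the statement must be read as an inclusive \emph{or}. Similarly, the boundary case $|p_0|=2$ in the second condition reduces to the first via a repeated real root at $\pm 1$, which is a consistency check one should acknowledge. Beyond this case-analysis bookkeeping and one application of Vieta's formulas, I would not anticipate any technical difficulty.
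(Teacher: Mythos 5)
Your proof is correct and complete: the case split between a real unit-modulus root (forcing $\lambda=\pm 1$ and hence $|p_0|=|1+p_1|$) and a non-real conjugate pair (forcing $p_1=\lambda\bar\lambda=1$ and $|p_0|=2|\mathrm{Re}\,\lambda|\le 2$), together with the direct verification of sufficiency via the discriminant, covers all cases, and your remark that the two conditions overlap (e.g.\ at $(p_0,p_1)=(2,1)$) is a fair reading of the inclusive ``or.'' The paper itself offers no proof --- it cites the statement verbatim from Kulenovi\'c and Ladas (Theorem 1.1.1, part (f)) --- so there is nothing to compare against; your elementary Vieta-based argument is exactly the standard one and would serve as a self-contained justification.
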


Lemma~\ref{sufficient} is cited from \cite[Theorem 5.10, P.~253]{Elaydi2005}.

\begin{lemma}[\cite{Elaydi2005}]
\label{sufficient}
If $\displaystyle \sum_{i=0}^{k}|p_i|<1$ then the zero solution of the difference equation
\begin{equation*}
x_{n+k+1} +p_0 x_{n+k} + p_1 x_{n+k-1}+...+p_k x_n = 0
\end{equation*}
is asymptotically stable.
\end{lemma}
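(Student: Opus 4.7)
My plan is to reduce the assertion to a statement about the roots of the characteristic polynomial and then use a direct contradiction argument based on the triangle inequality. Recall that asymptotic stability of the zero solution of a linear autonomous difference equation with constant coefficients is equivalent to all roots of its characteristic polynomial lying strictly inside the open unit disk; this is a standard result available in, e.g., the same reference \cite{Elaydi2005}. So the task reduces to proving that every root of
\begin{equation*}
P(\lambda) = \lambda^{k+1} + p_0 \lambda^{k} + p_1 \lambda^{k-1} + \cdots + p_k
\end{equation*}
satisfies $|\lambda|<1$ under the hypothesis $\sum_{i=0}^{k}|p_i|<1$.

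The key step I would carry out is a proof by contradiction. Suppose $\lambda$ is a root of $P$ with $|\lambda|\geq 1$. Dividing the equation $P(\lambda)=0$ by $\lambda^{k+1}$ (which is nonzero since $P(0) = p_k$ and the root $\lambda=0$ is excluded when $|\lambda|\geq 1$) yields
\begin{equation*}
1 = -\frac{p_0}{\lambda} - \frac{p_1}{\lambda^2} - \cdots - \frac{p_k}{\lambda^{k+1}}.
\end{equation*}
Taking moduli and applying the triangle inequality together with $|\lambda|\geq 1$ (so that $|\lambda|^{-j}\leq 1$ for every $j\geq 1$), I obtain
\begin{equation*}
1 \leq \sum_{i=0}^{k} \frac{|p_i|}{|\lambda|^{i+1}} \leq \sum_{i=0}^{k} |p_i| < 1,
\end{equation*}
a contradiction. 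Hence every root satisfies $|\lambda|<1$, and asymptotic stability of the zero solution follows.

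The main obstacle is essentially bookkeeping rather than conceptual: one must carefully handle the indexing (the characteristic polynomial of the given $(k+1)$-st order recurrence has degree $k+1$, so the division yields $k+1$ negative powers of $\lambda$) and justify the passage from root location to asymptotic stability, which in the general case requires that any root on or outside the unit circle be ruled out, including repeated ones; the strict inequality $\sum|p_i|<1$ is exactly what delivers the strict inequality $|\lambda|<1$ and thus rules out boundary roots automatically, so no separate treatment of multiplicities is needed. An alternative route I would keep in mind, in case one prefers an estimate in the time domain, is a direct induction establishing $|x_n|\leq C \rho^n$ for some $\rho<1$ by using the recurrence $|x_{n+k+1}|\leq \sum_{i=0}^{k}|p_i|\,\max_{0\leq j\leq k}|x_{n+j}|$, but the characteristic-root argument above is cleaner and is the standard proof in \cite{Elaydi2005}.
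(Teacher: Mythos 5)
Your argument is correct and complete: the reduction to the root condition, the division by $\lambda^{k+1}$, and the triangle-inequality contradiction for $|\lambda|\geq 1$ are all sound, and the strict inequality $\sum_{i=0}^{k}|p_i|<1$ indeed rules out boundary roots so that no separate treatment of multiplicities is needed. Note that the paper itself gives no proof of this statement --- it is quoted verbatim from \cite[Theorem 5.10]{Elaydi2005} --- so there is nothing in the paper to compare against; your proof is the standard one for this classical sufficient condition.
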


The following result can be found in \cite[Theorem 5.3, P.~249]{Elaydi2005}.

\begin{lemma}[\cite{Elaydi2005}]
\label{Lem:MoreGeneralk}
Let $p_0>0$, $p_k\in {\mathbb R}$ be arbitrary, and $k \in {\mathbb N}$. 
The zero solution of the equation
\begin{equation}
\label{geneq:difference}
x_{n+1} - p_0 x_n + p_k x_{n-k} = 0
\end{equation}
is asymptotically stable if and only if $|p_0| < (k+1)/k$
and
\\
(i) $\displaystyle |p_0| -1 < p_k < (p_0^2 + 1 - 2 |p_0| \cos(\theta^*))^{1/2}$ if $k$ is odd 
\\
or
\\
(ii) $|p_k-p_0| < 1$ and $\displaystyle |p_k| < (p_0^2 + 1 - 2 |p_0| \cos(\theta^*))^{1/2}$ if $k$ is even,
\\
where $\theta^*$ is the solution of the equation 
\begin{equation}
\label{theta}
\frac{\sin(k \theta)}{\sin((k+1)\theta)} 
= \frac{1}{|p_0|}, \quad \theta \in \left(0,\frac{\pi}{k+1} \right).
\end{equation}
\end{lemma}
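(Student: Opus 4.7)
The plan is to analyze stability via the characteristic polynomial $P(\lambda) = \lambda^{k+1} - p_0 \lambda^k + p_k$ of \eqref{geneq:difference}; the zero solution is asymptotically stable if and only if every root of $P$ lies in the open unit disk. I would identify the stability region in the $(p_0, p_k)$-plane by tracking boundary crossings, i.e., the parameter values at which some root meets the unit circle $|\lambda| = 1$. Since the coefficients of $P$ depend continuously on $(p_0, p_k)$, the stability region is an open set whose boundary is contained in the union of three crossing loci, corresponding to $\lambda = 1$, $\lambda = -1$, and $\lambda = e^{\pm i\theta}$ with $\theta \in (0, \pi)$.

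At $\lambda = 1$ the equation $P(1) = 1 - p_0 + p_k = 0$ yields the line $p_k = p_0 - 1$. At $\lambda = -1$, $P(-1) = (-1)^{k+1} - p_0(-1)^k + p_k = 0$ splits by parity, producing the binding real-axis constraint that appears as $|p_k - p_0| < 1$ when $k$ is even and gets absorbed into the odd-case inequality $|p_0| - 1 < p_k$ when $k$ is odd. At $\lambda = e^{i\theta}$, separating real and imaginary parts of $P(e^{i\theta}) = 0$ gives
\begin{equation*}
\sin\theta = p_k \sin(k\theta), \qquad \cos\theta - p_0 + p_k \cos(k\theta) = 0.
\end{equation*}
Eliminating $p_k$ and using the identity $\sin((k+1)\theta) = \sin(k\theta)\cos\theta + \cos(k\theta)\sin\theta$ yields
\begin{equation*}
p_0 = \frac{\sin((k+1)\theta)}{\sin(k\theta)}, \qquad p_k = \frac{\sin\theta}{\sin(k\theta)},
\end{equation*}
from which a short manipulation produces $p_k^2 = p_0^2 + 1 - 2 p_0 \cos\theta$, exactly the bound stated in the lemma.

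Next I would restrict the complex-crossing parametrization to $\theta \in (0, \pi/(k+1))$, where both $\sin((k+1)\theta)$ and $\sin(k\theta)$ are positive. On this interval $\sin((k+1)\theta)/\sin(k\theta)$ decreases from $(k+1)/k$ (as $\theta \to 0^+$) down to $0$ (at $\theta = \pi/(k+1)$), so the equation $\sin(k\theta)/\sin((k+1)\theta) = 1/|p_0|$, which is \eqref{theta}, has a unique solution $\theta^*$ precisely when $|p_0| < (k+1)/k$. This explains the \emph{a priori} cap in the hypothesis: for $|p_0| \geq (k+1)/k$, a conjugate pair of roots has already left the unit disk. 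For admissible $p_0$, the critical $|p_k|$ is $(p_0^2 + 1 - 2|p_0|\cos\theta^*)^{1/2}$.

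To close the argument I would pick the test point $(p_0, p_k) = (0, 0)$, where $P(\lambda) = \lambda^{k+1}$ has all roots at the origin and is trivially Schur stable. Since no root can migrate across $|\lambda| = 1$ without meeting one of the three boundary loci, the open connected component of parameter space containing the origin is precisely the set cut out by the inequalities in (i) or (ii), and sufficiency and necessity are obtained simultaneously. The main obstacle I anticipate is the careful bookkeeping at $\lambda = -1$ together with verifying that the crossing curve $\theta \mapsto (p_0(\theta), p_k(\theta))$ traces the \emph{outer} boundary of the component through the origin rather than an internal artifact; this is where the restriction $\theta \in (0, \pi/(k+1))$ and the sign $p_0 > 0$ pin down the direction of the bounds in (i) and (ii) and force the parity split.
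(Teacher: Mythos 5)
The paper offers no proof of this statement: it is quoted verbatim from Elaydi \cite{Elaydi2005} (ultimately Kuruklis's theorem on $x_{n+1}-ax_n+bx_{n-k}=0$), so there is nothing internal to compare against. Your D-decomposition strategy is in fact the standard route to this result, and the computations you do show are correct: the real crossings $P(1)=0$ and $P(-1)=0$ give the lines $p_k=p_0-1$ and $p_k=\pm(p_0+1)$ with the right parity split, and the complex crossing $e^{i\theta}-p_0+p_ke^{-ik\theta}=0$ correctly yields $p_0=\sin((k+1)\theta)/\sin(k\theta)$, $p_k=\sin\theta/\sin(k\theta)$ and $|p_k|^2=|p_0-e^{i\theta}|^2=p_0^2+1-2p_0\cos\theta$.

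However, the endgame has genuine gaps, and they are exactly where the real work of the theorem lives. First, your test point $(0,0)$ only certifies stability on the connected component of the complement of the crossing loci that contains the origin; but for $1\le p_0<(k+1)/k$ the point $p_k=0$ is \emph{unstable} (the polynomial is $\lambda^k(\lambda-p_0)$), so the region described by (i)--(ii) is not obviously that component, and you must separately prove it is connected, that no branch of the complex-crossing curve with $\theta\in(\pi/(k+1),\pi)$ cuts through its interior, and that each neighbouring component is unstable (necessity is not ``obtained simultaneously'' without a transversality or test-point argument on the other side of each boundary arc). Second, the necessity of $|p_0|<(k+1)/k$ is asserted (``a conjugate pair has already left the disk'') but never derived; it does not follow from the mere nonexistence of a solution to \eqref{theta}. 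Third, the monotonicity of $\sin((k+1)\theta)/\sin(k\theta)$ on $(0,\pi/(k+1))$, which you use to get uniqueness of $\theta^*$, is stated without proof (it is true, and incidentally the paper proves the equivalent monotonicity of the reciprocal inside the proof of Lemma~\ref{Thm:LessSus}, so you could borrow that computation). Finally, the asymmetry between the one-sided bound $p_k<(\,\cdot\,)^{1/2}$ for odd $k$ and the two-sided bound $|p_k|<(\,\cdot\,)^{1/2}$ for even $k$ --- i.e., which locus bounds the region from below for negative $p_k$ --- is precisely the ``bookkeeping'' you defer, and it is not a routine afterthought. As a proof sketch the architecture is right; as a proof it is incomplete in the necessity direction and in identifying the stability region with the stated inequalities.
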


However, we do not need the general form of Lemma~\ref{Lem:MoreGeneralk}, since in our model $0< p_k< p_0$. 
Then the left inequality in both (i) and (ii) becomes $p_0<p_k+1$, the right inequalities coincide.

\begin{corollary}
\label{Cor:MoreGeneralk}
Let $0< p_k< p_0$. Then \eqref{geneq:difference} is asymptotically stable 
if and only if the following two inequalities hold:
\begin{equation}
\label{gen_stab_cond}
p_0 < \min \left\{ p_k+1, \frac{k+1}{k} \right\}, \quad p_k < \sqrt{ p_0^2 + 1 - 2 p_0 \cos(\theta^*) } \, , 
\end{equation}
where $\theta^*$ is the solution of \eqref{theta}.
\end{corollary}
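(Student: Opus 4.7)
The plan is essentially to specialize Lemma~\ref{Lem:MoreGeneralk} to the sign assumptions $0 < p_k < p_0$ and check that both parity cases in that lemma collapse to the single pair of inequalities asserted in the corollary; there is no new analytic content to produce, only a careful bookkeeping of absolute values.

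First, since $p_0 > 0$ and $p_k > 0$, I would rewrite $|p_0| = p_0$ and $|p_k| = p_k$ throughout Lemma~\ref{Lem:MoreGeneralk}. The global hypothesis $|p_0| < (k+1)/k$ becomes $p_0 < (k+1)/k$. The right-hand inequality on $p_k$ is already the desired $p_k < \sqrt{p_0^2 + 1 - 2 p_0 \cos \theta^*}$, and $\theta^*$ is defined by the same equation \eqref{theta} since $1/|p_0| = 1/p_0$.

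Next I would handle the two parities separately. For $k$ odd, part (i) gives $|p_0| - 1 < p_k$, which is exactly $p_0 < p_k + 1$. For $k$ even, part (ii) gives $|p_k - p_0| < 1$; because $p_k < p_0$ this reduces to $p_0 - p_k < 1$, i.e.\ again $p_0 < p_k + 1$. The other condition $|p_k| < \sqrt{p_0^2 + 1 - 2 p_0 \cos \theta^*}$ in case (ii) is identical to the right-hand inequality already present in case (i). Hence both parities yield the same pair of conditions $p_0 < p_k + 1$ and $p_k < \sqrt{p_0^2 + 1 - 2 p_0 \cos \theta^*}$.

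Finally, I would merge $p_0 < p_k + 1$ with the standing bound $p_0 < (k+1)/k$ into the single statement $p_0 < \min\{p_k + 1,\, (k+1)/k\}$, which together with the second inequality is exactly \eqref{gen_stab_cond}. The only thing that even resembles an obstacle is making sure that the conversion $|p_k - p_0| < 1 \Longleftrightarrow p_0 < p_k + 1$ really uses only $p_k < p_0$ (not $p_k \leq p_0 - 1$ or similar), and that the upper bound $p_k < \sqrt{p_0^2 + 1 - 2 p_0 \cos \theta^*}$ is automatically consistent with $p_k < p_0$ so that no spurious constraint is being dropped; both are immediate. Since Lemma~\ref{Lem:MoreGeneralk} is an if-and-only-if statement and each reduction above is an equivalence under the hypothesis $0 < p_k < p_0$, the corollary follows as an if-and-only-if as well.
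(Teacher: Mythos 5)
Your proposal is correct and takes essentially the same route as the paper, which justifies the corollary with exactly this observation: since $0<p_k<p_0$ the left inequalities in parts (i) and (ii) of Lemma~\ref{Lem:MoreGeneralk} both reduce to $p_0<p_k+1$ and the right inequalities coincide. Your extra bookkeeping (checking that $|p_k-p_0|<1$ uses only $p_k<p_0$, and that all absolute values drop) is just a more explicit version of the paper's one-line remark.
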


Lemma~\ref{Lem:Generalk} is cited from \cite[Theorem 5.2, P.~248]{Elaydi2005}.

\begin{lemma}[\cite{Elaydi2005}]
\label{Lem:Generalk}
Let $q \in (0,2)$. The zero solution of the equation
\begin{equation*}
x_{n+1} = x_n - q x_{n-k}
\end{equation*}
is asymptotically stable for $k=1$. 
It is asymptotically stable for $k \geq 2$ if and only if in addition
\begin{equation*}
q < 2 \cos\bigg( \frac{k \pi}{2k +1}\bigg).
\end{equation*}
\end{lemma}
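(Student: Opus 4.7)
The plan is to specialize the cited general stability results for $x_{n+1} - p_0 x_n + p_k x_{n-k}=0$ to the choice $p_0=1$, $p_k=q$. For $k=1$ I would proceed directly via Lemma~\ref{Cor:CritK1}: the characteristic polynomial $\lambda^2 - \lambda + q$ has both roots strictly inside the open unit disk precisely when the condition $p_0-1 < p_1 < 1$ there becomes $0 < q < 1$, which coincides with the value of $2\cos(k\pi/(2k+1))$ at $k=1$.

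For $k \geq 2$ I would invoke Lemma~\ref{Lem:MoreGeneralk}. The preliminary requirement $|p_0| < (k+1)/k$ becomes $1 < (k+1)/k$ and is automatic, while the left inequalities in parts (i) and (ii) reduce respectively to $q > 0$ (odd $k$) and $|q-1| < 1$, i.e.\ $q \in (0,2)$ (even $k$); both are covered by the standing assumption $q \in (0,2)$. The only binding condition is therefore the right inequality, which in our setting takes the form $q < \sqrt{2-2\cos\theta^*}$, or equivalently $q < 2\sin(\theta^*/2)$ after applying the half-angle identity.

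The central computation is solving \eqref{theta} for $p_0=1$, namely $\sin(k\theta) = \sin((k+1)\theta)$ on $\theta \in (0,\pi/(k+1))$. Rewriting this via $\sin A = \sin B \Leftrightarrow A = \pi - B \pmod{2\pi}$ gives $(2k+1)\theta = \pi + 2\pi m$, the only family compatible with the interval, and one checks that the unique admissible root is $\theta^* = \pi/(2k+1)$. The complementary-angle identity $\pi/(2(2k+1)) = \pi/2 - k\pi/(2k+1)$ then converts $2\sin(\theta^*/2)$ into $2\cos(k\pi/(2k+1))$, producing the claimed threshold.

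The only mildly delicate step is isolating $\theta^*$ uniquely inside the prescribed interval $(0,\pi/(k+1))$; the remainder is routine trigonometric manipulation together with the bookkeeping required to verify that neither auxiliary side condition in Lemma~\ref{Lem:MoreGeneralk} imposes a further restriction beyond $q \in (0,2)$.
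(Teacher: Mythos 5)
The paper offers no proof of Lemma~\ref{Lem:Generalk}: it is quoted from Elaydi's book without argument, so there is no in-paper proof to compare yours against. Your reconstruction---specializing Lemma~\ref{Lem:MoreGeneralk} to $p_0=1$, $p_k=q$---is the natural route and the bookkeeping is correct: $|p_0|<(k+1)/k$ is automatic, the left inequalities in (i) and (ii) reduce to $q>0$ and $q\in(0,2)$ respectively, equation \eqref{theta} becomes $\sin(k\theta)=\sin((k+1)\theta)$, whose unique root in $\left(0,\pi/(k+1)\right)$ is indeed $\theta^*=\pi/(2k+1)$ (the branch $(2k+1)\theta=\pi+2\pi m$ with $m=0$; $m\ge 1$ leaves the interval), and
$\sqrt{2-2\cos\theta^*}=2\sin\big(\tfrac{\pi}{2(2k+1)}\big)=2\cos\big(\tfrac{k\pi}{2k+1}\big)$
by the complementary-angle identity. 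For $k\ge 2$ this yields exactly the stated criterion.

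The one point you should not pass over silently is the $k=1$ clause. Your own computation via Lemma~\ref{Cor:CritK1} gives asymptotic stability for $k=1$ precisely when $0<q<1=2\cos(\pi/3)$, i.e., the cosine bound is binding at $k=1$ as well. That contradicts the lemma as printed, which asserts stability for $k=1$ under the standing hypothesis $q\in(0,2)$ alone: for $q\in[1,2)$ the roots of $\lambda^2-\lambda+q$ are complex with modulus $\sqrt{q}\ge 1$, so the zero solution is not asymptotically stable. What you have actually proved is the correct uniform statement (asymptotic stability if and only if $0<q<2\cos(k\pi/(2k+1))$ for every $k\ge 1$), not the statement as written. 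This is a defect of the cited formulation rather than of your argument, and it is harmless downstream---in Theorem~\ref{Thm:Stable} the lemma is only invoked with $q=1-e^{-rT/2}\in(0,1)$---but your write-up currently glosses over the discrepancy by asserting that the $k=1$ condition ``coincides'' with the threshold; you should instead state explicitly that the printed $k=1$ clause needs the additional restriction $q<1$.
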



\section{Stability with Delay Impulsive Harvesting}
\label{sec:stabil}

We start with reducing the dynamics of the differential equation with delayed impulsive harvesting to a difference equation. 

\begin{lemma}
\label{Lem:DiffSol}
The solution of \eqref{Impulse:Delay} 
on the interval $t\in(nT,(n+1)T)$, $n=0,1,\dots$ is 
\begin{equation}
\label{Impulse:Sol}
N(t) = \frac{K_c e^{r(t - nT)} N(nT^+)}{K_c + N(nT^+)(e^{r(t-nT)}-1)} \, .
\end{equation}
The solution of \eqref{Impulse:Delay} with $N(nT^+) = x_n$ at the points just after harvesting at $t = nT$ satisfies the difference equation
\begin{equation}
\label{Difference:k}
 x_{n+1} = \max\bigg\{\frac{K_c x_{n}e^{rT}}{K_c + x_{n}(e^{rT}-1)} - E \frac{K_c x_{n-k}e^{rT}}{K_c + x_{n-k}(e^{rT}-1)},0 \bigg\} = \max\bigg\{ f\left( x_n,x_{n-k} \right),0 \bigg\} , 
\end{equation}
where $n \geq k$. 
In addition $x_0 = N(0^+)$, and $x_1,...,x_{k}$ satisfy the relation
\begin{equation}
\label{Diff:Initial}
x_{i+1} = \max\bigg\{ \frac{K_c x_{i}e^{rT}}{K_c + x_{i}(e^{rT}-1)} - E N((i-k+1)T),0\bigg\}
\end{equation}
where $N((i-k+1)T)$ are given by the initial conditions for $i=0,...,k-1$.
\end{lemma}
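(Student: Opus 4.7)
The plan is to solve the logistic ODE in closed form on each inter-impulse interval $(nT,(n+1)T)$, evaluate at the left endpoint of the next interval, and then compose with the jump condition in \eqref{Impulse:Delay} to obtain a recursion in the post-harvest sequence $x_n := N(nT^+)$. I expect to split the range of $n$ into the initial window $0 \le n \le k-1$, where the delayed argument lies at or before time zero and must be kept as prescribed data, and the regime $n \ge k$, where the delayed value can itself be re-expressed through an earlier $x_{n-k}$.

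First I would observe that on $(nT,(n+1)T)$ no impulse acts, so $N$ satisfies the standard logistic ODE with right-limit value $N(nT^+)$ at $t = nT^+$. This is a Bernoulli equation, and separation of variables (equivalently, the substitution $u = 1/N$) produces exactly \eqref{Impulse:Sol}. Next, invoking left-continuity of $N$, I would let $t \to (n+1)T^-$ in that formula to obtain
\[
N\bigl((n+1)T\bigr) \;=\; \frac{K_c\, x_n\, e^{rT}}{K_c + x_n\bigl(e^{rT}-1\bigr)}, \qquad n \ge 0,
\]
with $x_0 = N(0^+) = N_0^+$. The jump condition at $t = (n+1)T$ then reads
\[
x_{n+1} \;=\; \max\Bigl\{\, N\bigl((n+1)T\bigr) - E\,N\bigl((n+1-k)T\bigr),\ 0 \,\Bigr\}.
\]
For $n \ge k$ I would apply the previous display with $m = n+1-k \ge 1$ in place of $n+1$ to rewrite $N((n+1-k)T)$ in terms of $x_{n-k}$; substitution yields \eqref{Difference:k}. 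For $0 \le n \le k-1$ the argument $(n+1-k)T$ is non-positive, so $N((n+1-k)T)$ is one of the prescribed initial values $N_{n+1-k}$ and must be left in that form, producing \eqref{Diff:Initial}.

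The main point requiring a little care, rather than a genuine analytic obstacle, is the index bookkeeping: one must check that left-continuity of $N$ and the one-sided post-impulse value $N(nT^+)$ are used consistently on opposite sides of each impulse, and that the regimes $n \ge k$ and $0 \le n \le k-1$ together exhaust all $n \ge 0$. Once that is in place, the two recursions \eqref{Difference:k} and \eqref{Diff:Initial}, together with the base value $x_0 = N_0^+$, fully determine the post-harvest sequence and complete the reduction.
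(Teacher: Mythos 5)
Your proposal is correct and follows essentially the same route as the paper: solve the logistic equation in closed form on each inter-impulse interval, use left-continuity to evaluate $N((n+1)T)$ in terms of $x_n$, apply the truncated jump condition, and split into the initial window $0\le n\le k-1$ (where the delayed value is prescribed data) versus $n\ge k$ (where it is re-expressed through $x_{n-k}$). The index bookkeeping you flag is handled identically in the paper, so no further comparison is needed.
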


\begin{proof}
Let $ N(nT^+) = x_n$ be the size of the population after a harvesting event. For $t\in(nT,(n+1)T)$, 
the impulsive model is non-delayed, and the solution of the differential equation exists, is monotone on the interval and is given by
\begin{equation*}
N(t) = \frac{K_c e^{r(t - nT)} N(nT^+)}{N(nT^+)(e^{r(t-nT)}-1)+K_c},\quad t\in(nT,(n+1)T).
\end{equation*}
The size of $N(t)$ at the end of the $n$-th time period before harvesting is 
\begin{equation*}
    N((n+1)T) = \frac{\displaystyle K_c N(nT^+) e^{rT}}{\displaystyle K_c + N(nT^+)(e^{rT}-1)}.
\end{equation*}
Using the value of $N(t)$ before harvesting as a function of $N(nT^+)$, combined with the definition of the impulsive condition in \eqref{Impulse:Delay}, leads to
\begin{equation*}
 N((n+1)T^+) =\max \left\{ \frac{\displaystyle K_c N(nT^+)e^{rT}}{\displaystyle K_c + N(nT^+)(e^{rT}-1)} - E N((n+1-k)T),0 \right\}.
\end{equation*}
For $n=0,...,k-1$, $N((n+1-k)T)$ are given by the initial conditions, leading to  relation \eqref{Diff:Initial}. For $n\geq k$ we once again treat $N((n+1-k)T)$ as a function of $N((n-k)T^+)$ and obtain
\begin{equation*}
 N((n+1)T^+) =\max \left\{ \frac{\displaystyle K_c N(nT^+)e^{rT}}{\displaystyle K_c + N(nT^+)(e^{rT}-1)} - E \frac{\displaystyle K_c N((n-k)T^+)e^{rT}}{\displaystyle K_c + N((n-k)T^+)(e^{rT}-1)},0 \right\} .
 \end{equation*}
Then, since $x_n = N(nT^+)$ the difference equation \eqref{Difference:k} is obtained.
\end{proof}

After the reduction to a difference equation, let us justify that stability of \eqref{Impulse:Delay} can be deduced from that of \eqref{Difference:k}.

\begin{lemma}
\label{connection}
The point $x^*$ is a locally asymptotically stable equilibrium of the difference equation \eqref{Difference:k} if and only if the solution to \eqref{Impulse:Delay} with $N^*(nT^+) = x^*$ is locally asymptotically stable.
\end{lemma}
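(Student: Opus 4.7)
The plan is to set up a bi-Lipschitz correspondence, in a neighborhood of $x^*$, between the trajectory $\{N(t)\}_{t>0}$ of \eqref{Impulse:Delay} and the sequence $\{x_n\}_{n\geq 0}$ generated by \eqref{Difference:k}, and then translate the two stability definitions through it. First I would introduce the smooth flow map $\phi(s,x):=K_c e^{rs}x/(K_c+x(e^{rs}-1))$ extracted from Lemma \ref{Lem:DiffSol}, so that $N(t)=\phi(t-nT,x_n)$ on each interval $(nT,(n+1)T]$. Since $\phi\in C^\infty$ on $[0,T]\times(0,\infty)$ and $\partial_x\phi(s,x^*)>0$ is continuous in $s$, I would fix a neighborhood of $x^*$ and a constant $L\geq 1$ for which
\[
L^{-1}|x-x^*|\;\leq\;\sup_{s\in(0,T]}|\phi(s,x)-\phi(s,x^*)|\;\leq\; L|x-x^*|.
\]

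Next I would address the $\max$ operator. Because $x^*>0$ is a fixed point with $f(x^*,x^*)=x^*>0$, the $\max$ in \eqref{Difference:k} is inactive on a small neighborhood of $(x^*,x^*)$; likewise, the right-hand side of \eqref{Diff:Initial} evaluated along the reference solution is strictly positive and remains so under small perturbations. On that neighborhood the recurrence is $C^\infty$, and \eqref{Diff:Initial} is affine (hence Lipschitz-invertible, using $E\in(0,1)$) in each delayed input $N((i-k+1)T)$. This produces a bi-Lipschitz map between the impulsive initial data $(N_0^+,N_0,N_{-1},\ldots,N_{-(k-1)})$ and the difference equation's initial data $(x_0,x_1,\ldots,x_k)$.

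With these two ingredients the implications become routine bookkeeping. For the forward direction, given $\varepsilon>0$ for the impulsive equation, I would invoke local asymptotic stability of $x^*$ for \eqref{Difference:k} to obtain a threshold $\eta$ that controls $\sup_{n}|x_n-x^*|$, pull $\eta$ back through the initial-data map to a threshold $\delta$ on the impulsive initial data, and then apply the estimate $|N(t)-N^*(t)|\leq L|x_n-x^*|<\varepsilon$ for $t\in(nT,(n+1)T]$. For the reverse direction I would do the mirror argument, pushing a threshold for the difference-equation initial data through the inverse of the bi-Lipschitz map; the lower bound $|x_n-x^*|\leq L\sup_{t\in(nT,(n+1)T]}|N(t)-N^*(t)|$ then yields the conclusion, with convergence $x_n\to x^*$ iff $N(t)\to N^*(t)$ following from the same two-sided estimate. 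The main obstacle I anticipate is verifying the $\max$ is inactive near the positive reference trajectory and that the affine system \eqref{Diff:Initial} is genuinely invertible there, so that the two dynamics are conjugate via a smooth map and the $\varepsilon$-$\delta$ manipulations are justified; once that is settled, the equivalence of stability notions is immediate.
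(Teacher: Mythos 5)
Your proposal is correct and follows essentially the same route as the paper: both reduce the comparison to the explicit between-impulse flow map $F(x,a)=K_c a x/(K_c+x(a-1))$ and use its Lipschitz bound in $x$ (the paper uses the uniform estimate $\partial F/\partial x\le e^{rT}$) to translate the $\varepsilon$--$\delta$ definitions between \eqref{Difference:k} and \eqref{Impulse:Delay}. The only difference is one of detail: you make explicit the local invertibility of the initial-data correspondence \eqref{Diff:Initial} and the inactivity of the $\max$ near the positive equilibrium, points the paper treats as evident.
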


\begin{proof}
Evidently local asymptotic stability of the solution to \eqref{Impulse:Delay} with $N(nT^+) = x^*$ implies local asymptotic stability of the equilibrium $x^*$ of \eqref{Difference:k}.

Further, let us assume that the solution $x^*$ of \eqref{Difference:k} is stable. 
Note that the function 
\begin{equation}
\label{F_def}
F(x,a) := \frac{K_c a x}{K_c+ x (a -1)}
\end{equation} 
for a fixed $a = e^{rs} >1$ and any non-negative $x$, has the derivative in $x$
$$
\frac{\partial F \left( x,e^{rs} \right)}{\partial x} = \frac{K_c^2 e^{rs}}{( K_c+ x (e^{rs} -1) )^2} > 0, \quad 
\left| \frac{\partial F \left( x,e^{rs} \right)}{\partial x} \right| \leq e^{rs} \leq e^{rT}.
$$
If the equilibrium $x^*$ of \eqref{Difference:k} is stable, for any $\varepsilon>0$ there exists $\delta>0$ such that, once all $|x_{j}-x^*|<\delta$, $j =0, \dots, k$, we get
$\displaystyle |x_n - x^*| < \varepsilon e^{-rT}$. 
The solution $N^*$ corresponding to $x^*$ on the interval $(nT, (n+1)T)$ is $N^*(nT^+)=x^*$ with 
$$
N^*(t) = \frac{K_c e^{r(t - nT)} x^*}{x^*(e^{r(t-nT)}-1)+K_c} \, .
$$
Following \eqref{Impulse:Sol}, we note that 
on $(nT,(n+1)T)$  there exist $\zeta$ between $x_n$ and $x^*$, $s \in [0,T]$ such that 
\begin{align*}
| N(t) - N^*(t) |  & = \left| \frac{K_c e^{r(t - nT)} x_n}{x_n(e^{r(t-nT)}-1)+K_c} - 
\frac{K_c e^{r(t - nT)} x^*}{x^*(e^{r(t-nT)}-1)+K_c} \right| \\ & = \left| \frac{\partial F(\zeta,^{rs})}{\partial x} \right| |x_n - x^*| \leq e^{rT} \varepsilon e^{-rT} = \varepsilon,
\end{align*}
thus $N^*$ is stable.

If the solution $x^*$ of \eqref{Difference:k} be locally asymptotically stable, and $n_0$ is such a number that
$\displaystyle |x_n - x^*| < \varepsilon e^{-rT}$ for $n \geq n_0$, as above,
$$
| N(t) - N^*(t) |  \leq e^{rT} \varepsilon e^{-rT} = \varepsilon, \quad t \geq nT, \quad n \geq n_0,
$$
thus $N^*$ is both stable and attractive, and therefore is locally asymptotically stable, which concludes the proof.
\end{proof}

Next, let us describe solution bounds for a harvested population.

\begin{lemma}
\label{lemma_bounds}
Let $E \in (0,1)$. Then for any non-negative initial values there exists $n_0 \geq k$ 
such that
the solution $x_n$ to \eqref{Difference:k} is in $[0,K_c]$ for $n \geq n_0$.
\end{lemma}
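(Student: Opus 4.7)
My plan is to establish forward invariance of $[0,K_c]$ under the recursion \eqref{Difference:k} and then argue that the trajectory must enter this interval in finitely many steps. Set $F(x) := F(x, e^{rT}) = K_c x e^{rT}/(K_c + x(e^{rT}-1))$, the function already introduced in \eqref{F_def}. It is strictly increasing in $x \geq 0$ with $F(0)=0$ and $F(K_c)=K_c$, and the elementary rearrangement
\begin{equation*}
F(x) - x \;=\; \frac{x(e^{rT}-1)(K_c - x)}{K_c + x(e^{rT}-1)}
\end{equation*}
shows that $F(x) < x$ precisely when $x > K_c$. Since $x_{n+1} \leq F(x_n)$ at every step (the subtracted term $E F(x_{n-k})$ is non-negative), the monotonicity of $F$ gives immediately that $x_n \leq K_c$ implies $x_{n+1} \leq F(K_c) = K_c$. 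Combined with the built-in non-negativity from the maximum, this proves that $[0,K_c]$ is forward invariant.

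Next I would show, by contradiction, that some index $n_0 \geq k$ satisfies $x_{n_0} \leq K_c$. Suppose instead $x_n > K_c$ for every $n \geq k$. The inequality $F(x_n) < x_n$ then forces $x_{n+1} \leq F(x_n) < x_n$, so $\{x_n\}_{n \geq k}$ is strictly decreasing and bounded below by $K_c$, hence converges to some $L \geq K_c$. Since $x_{n-k} \to L$ as well, the argument of the max in \eqref{Difference:k} tends to $(1-E)F(L) > 0$, which means the max is inactive for all sufficiently large $n$, and passing to the limit in \eqref{Difference:k} yields the scalar equation $L = (1-E)F(L, e^{rT})$.

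The final step is to show that this equation has no solution with $L \geq K_c$. Either $(1-E)e^{rT} \leq 1$, in which case a direct check rules out any positive root, or $(1-E)e^{rT} > 1$, in which case the unique positive root is
\begin{equation*}
L^* \;=\; \frac{K_c\bigl[(1-E)e^{rT} - 1\bigr]}{e^{rT} - 1},
\end{equation*}
and using $E > 0$ a one-line comparison gives $L^* < K_c$. In both subcases we contradict $L \geq K_c$, which completes the argument: there exists $n_0 \geq k$ with $x_{n_0} \leq K_c$, and the forward invariance from the first paragraph gives $x_n \in [0, K_c]$ for all $n \geq n_0$. I expect the mildest obstacle to be bookkeeping around the max becoming inactive in the limit; this is handled by the observation that $L \geq K_c > 0$ forces $(1-E)F(L) > 0$, so the max has no effect for large~$n$.
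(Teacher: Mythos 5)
Your proof is correct, and the first half (forward invariance of $[0,K_c]$ via monotonicity of $F$ and $F(K_c)=K_c$) coincides with the paper's. Where you diverge is in ruling out the scenario $x_n>K_c$ for all $n\geq k$. You observe only that $x_{n+1}\leq F(x_n)<x_n$, so the sequence is strictly decreasing and bounded below by $K_c$, then pass to the limit in the recursion to obtain $L=(1-E)F(L)$ and check that this fixed-point equation has no root $\geq K_c$ (indeed its only candidate positive root is the equilibrium $x^*$ of \eqref{pos:equil}, which is strictly below $K_c$ because $E>0$). The paper instead exploits the second term: since $x_{n-k}>K_c$ forces $F(x_{n-k},e^{rT})>K_c$, one gets the uniform decrement $x_{n+1}-x_n<-EK_c$, so the trajectory drops below $K_c$ after an explicitly computable number of steps, namely $\lfloor (x_n-K_c)/(EK_c)\rfloor+1$. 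Both arguments are sound; the paper's buys an effective bound on $n_0$ and avoids any limiting procedure, while yours is a softer compactness-style argument that reuses the equilibrium computation. Your side remark about the max being inactive in the limit is actually automatic: if $x_{n+1}>K_c>0$ then $\max\{a,0\}>0$ already forces $\max\{a,0\}=a$, so no separate justification is needed.
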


\begin{proof}
Let us note that if for some $n\geq k$, $x_n \leq K_c$, we also have $x_{n+1} \leq K_c$. Really, from monotone increasing of 
$F\left(x, a \right)$ defined in \eqref{F_def} in $x$ and $F\left(K_c, a \right)=K_c$ for any $a>1$, we get $x_{n+1} \leq F\left(x_n, e^{rT} \right) \leq K_c$. By induction, all $x_j \leq K_c$, $j \geq n$. Also, $F\left(x, a \right)> x$ for $x \in (0,K_c)$ and $F\left(x, a \right) < x$ for $x > K_c$.

Thus, we only have to exclude the case $x_n > K_c$ for any $n\geq k$.  
We have $x_n,x_{n-k} > K_c$,
$F\left(x_n, e^{rT} \right)<x_n$, $F\left(x_{n-k}, e^{rT} \right)> K_c$  and
$$
x_{n+1} = F\left(x_n, e^{rT} \right) - E F\left(x_{n-k}, e^{rT} \right)
< x_n - E K_c,
$$
therefore $x_{n+1}-x_n< - E K_c$, and after $\displaystyle j = \left\lfloor \frac{ x_n - K_c}{E K_c} \right\rfloor + 1$ steps, where $\lfloor y \rfloor$ is the integer part of $y$, we arrive at $x_{n+j} \leq K_c$, which concludes the proof.  
\end{proof}

Difference equation \eqref{Difference:k} has the trivial solution $x^* = 0$, and when $rT>-\ln(1-E)$ it has a single positive equilibrium
\begin{equation}
\label{pos:equil}
x^* = \frac{((1-E)e^{rT}-1)K_c}{e^{rT}-1} \, .
\end{equation}
If $rT\leq -\ln(1-E)$ then only the trivial equilibrium exists, and as Lemma \ref{theorem:onlyzero} states, all solutions of \eqref{Difference:k}, and hence of \eqref{Impulse:Delay}, will inevitably go to extinction.

\begin{lemma}
\label{theorem:onlyzero}
If
\begin{equation}
\label{zeroonly}
rT\leq -\ln(1-E),
\end{equation}
all solutions of \eqref{Difference:k} tend to zero.
\end{lemma}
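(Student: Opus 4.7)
The plan is to combine the boundedness from Lemma~\ref{lemma_bounds} with a $\limsup/\liminf$ comparison argument that exploits the fact that the hypothesis rules out any positive solution of $(1-E)F(x,e^{rT})=x$, which is precisely the fixed-point equation one gets when all windowed arguments are equal.

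First I would apply Lemma~\ref{lemma_bounds} so that $x_n\in[0,K_c]$ for all $n\ge n_0$, and note the absorbing nature of $0$: since $F(0,e^{rT})=0$, if $x_n=0$ for some $n\ge n_0$ then $x_m=\max\{-EF(x_{m-1-k},e^{rT}),0\}=0$ for every $m>n$, so the conclusion is trivial in that case. Thus I may restrict to the regime $x_n>0$ for all $n\ge n_0$, in which the $\max$ in \eqref{Difference:k} is inactive and the iteration reduces to the untruncated recursion $x_{n+1}=F(x_n,e^{rT})-EF(x_{n-k},e^{rT})$.

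Next I would verify the elementary inequality $(1-E)F(x,e^{rT})\le x$ for $x\in[0,K_c]$, strict for $x>0$; this is a direct algebraic consequence of $(1-E)e^{rT}\le 1$ after clearing the denominator of $F$, and it embodies the nonexistence of a positive equilibrium. Setting $L=\limsup_n x_n$ and $\ell=\liminf_n x_n$, by compactness and a diagonal extraction I would choose subsequences $n_j,m_j\to\infty$ with $x_{n_j}\to L$ and $x_{m_j}\to\ell$, along which additionally $x_{n_j-1},x_{n_j-1-k},x_{m_j-1},x_{m_j-1-k}$ all converge to limits lying in $[\ell,L]$. Passing to the limit in the recursion and using the monotonicity of $F(\cdot,e^{rT})$ to replace these auxiliary limits by $L$ or $\ell$ in the extremal direction, I would obtain the pair of inequalities $L\le F(L,e^{rT})-EF(\ell,e^{rT})$ and $\ell\ge F(\ell,e^{rT})-EF(L,e^{rT})$.

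Adding these after the appropriate rearrangement yields $L+\ell\le(1-E)\bigl(F(L,e^{rT})+F(\ell,e^{rT})\bigr)$, which when compared with the strict algebraic inequality above forces $L=\ell=0$, and hence $x_n\to 0$. The main point of care will be the diagonal extraction and choosing the correct direction of the monotonicity estimate when passing to the limit on each side; once this bookkeeping is done the conclusion is immediate, with no need for a Lyapunov function or an analysis of the linearised characteristic equation, which in fact would fail to certify stability of the zero solution for these parameters.
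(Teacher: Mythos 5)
Your reduction to the untruncated recursion, the boundedness step via Lemma~\ref{lemma_bounds}, and the key inequality $(1-E)F(x,e^{rT})<x$ for $x>0$ are all correct, and they are also the paper's starting point. The gap is in the final fluctuation step. From $x_{n_j}\to L$ and $x_{m_j}\to\ell$ you correctly obtain $L\le F(L,e^{rT})-EF(\ell,e^{rT})$ and $\ell\ge F(\ell,e^{rT})-EF(L,e^{rT})$, but these two inequalities point in opposite directions and cannot be added to give $L+\ell\le(1-E)\bigl(F(L,e^{rT})+F(\ell,e^{rT})\bigr)$: that combination would require $\ell\le F(\ell,e^{rT})-EF(L,e^{rT})$, which is the reverse of what you derived. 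Worse, the pair you actually have is satisfiable with $0\le\ell<L$. For instance, with $K_c=1$, $e^{rT}=2$, $E=1/2$ (so \eqref{zeroonly} holds with equality) one has $F(x,2)=2x/(1+x)$, and the choice $L=0.5$, $\ell=0.1$ gives $0.5\le 0.575\ldots$ and $0.1\ge -0.151\ldots$, both true. So the $\limsup/\liminf$ system yields no contradiction except when $L=\ell>0$; it only disposes of the convergent (eventually monotone) case, and the genuinely hard case of oscillating solutions is untouched.

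The paper closes exactly that case by a different mechanism. If at some step $x_n=\min\{x_{n-k},\dots,x_n\}$, then $F(x_n,e^{rT})\le F(x_{n-k},e^{rT})$ gives $x_{n+1}\le(1-E)F(x_n,e^{rT})<x_n$, so $x_{n+1}$ is again the minimum of its window and the sequence is eventually decreasing, hence converges to the only equilibrium, $0$. Otherwise $x_n>\min\{x_{n-k},\dots,x_{n-1}\}$ for all large $n$, which forces the window minima to be nondecreasing and hence $m=\liminf_n x_n>0$; a uniform-continuity argument applied to a term $x_n$ close to $m$ with $|x_n-x_{n-k}|$ small, together with the lower bound $H(x)\ge h(x)>\varepsilon$ on $[m/2,K_c]$, then yields $x_{n+1}<m-\delta/2$, a contradiction. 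You would need to supply an argument of this kind (or some other control of the oscillatory regime) for your proof to go through.
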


\begin{proof}
By 
Lemma~\ref{lemma_bounds}, we can consider $x_n\in [0,K_c]$ for $n$ large enough.
If for some $n \geq k$, $x_n=0$, we have $x_{j} = 0$ for any $j \geq n$ in \eqref{Difference:k}, 
so we restrict ourselves to only considering positive sequences $\{ x_n \}$.
Further, let  $\{ x_n \}$ be an eventually monotone sequence, then it has a limit $d$. If $d=0$,  the sequence converges to zero; if $d>0$, we let $n\to \infty$ in 
$$
 x_{n+1} =  \frac{K_c x_{n}e^{rT}}{K_c + x_{n}(e^{rT}-1)} - E \frac{K_c x_{n-k}e^{rT}}{K_c + x_{n-k}(e^{rT}-1)}
$$
and get that $d$ is a positive equilibrium solution of \eqref{Difference:k}, which is a contradiction.
Thus, we have only to consider sequences $\{ x_n \}$ that are neither eventually non-increasing nor eventually non-decreasing. 

Before we handle this case, let us recall that the function $F(x,a)$ defined in \eqref{F_def}
is strictly increasing in both $x$ and $a$ for $K_c>0$ and $a>1$ (here $a=e^{rT}>1$ for $rT>0$). 

Let $k=1$, since we are only considering sequences that are {\bf not} non-decreasing, this implies that there exists some $n$ such that  $x_n<x_{n-1}$. Then,
\begin{align*}
x_{n+1}  & =  \frac{K_c x_{n}e^{rT}}{K_c + x_{n}(e^{rT}-1)} - E \frac{K_c x_{n-1}e^{rT}}{K_c + x_{n-1}(e^{rT}-1)}
<  \frac{(1-E) K_c x_{n}e^{rT}}{K_c + x_{n}(e^{rT}-1)} \\ 
& \leq \frac{(1-E) K_c x_{n} \frac{1}{1-E}}{K_c + x_{n-k}( \frac{1}{1-E} - 1)} = \frac{ K_c x_{n} }{K_c + x_n \frac{E}{1-E}} < x_n,
\end{align*}
and by induction we get that $\{x_j\}$ is a monotonically decreasing sequence starting with $j=n$, and thus it converges to zero, as justified above.

Next, consider $k\geq 2$.
If there exists $n$ such that $x_n = \min \{ x_{n-k}, x_{n-k+1}, \dots, x_{n-1},x_n \}$, 
then since $F(x_n,e^{rT}) \leq F(x_{n-k},e^{rT})$ and by \eqref{zeroonly}, 
we get
\begin{align*}
x_{n+1}  & =  \frac{K_c x_{n}e^{rT}}{K_c + x_{n}(e^{rT}-1)} - E \frac{K_c x_{n-k}e^{rT}}{K_c + x_{n-k}(e^{rT}-1)}
\leq \frac{(1-E) K_c x_{n}e^{rT}}{K_c + x_{n}(e^{rT}-1)} \\ & \leq \frac{(1-E) K_c x_{n} \frac{1}{1-E}}{K_c + x_{n-k}( \frac{1}{1-E} - 1)} = \frac{ K_c x_{n} }{K_c + x_n \frac{E}{1-E}} < x_n
\end{align*}
as above. Thus $x_{n+1}<x_n$ and $x_{n+1} = \min \{ x_{n-k+1}, x_{n-k+2}, \dots, x_{n},x_{n+1} \}$, 
which yields that $x_{n+2}< x_{n+1}$, and by induction $\{ x_n \}$ is monotonically decreasing and thus converges to zero.

Let us, finally, consider the case when $x_n > \min \{ x_{n-k}, x_{n-k+1}, \dots, x_{n-1} \}$ for any $n \geq n_0 > k$. Then, 
\begin{equation}
\label{liminf}
m := \liminf_{n \to \infty} x_n \geq \min\{x_{n_0 - k},...,x_{n_0}\}.
\end{equation}
Introducing the functions $H$ and $h$ through $F$ as defined in \eqref{F_def} and using inequality \eqref{zeroonly}
leading to $\displaystyle e^{rT} \leq \frac{1}{1-E}$, we obtain
\begin{equation}
\label{def_H}
H(x) := x - (1-E)  F \left( x, e^{rT}, \right)  \geq x - (1-E)  F \left( x, \frac{1}{1-E}  \right)  = 
x - \frac{ K_c x }{K_c + x \frac{E}{1-E}} = : h(x). 
\end{equation}
Note that
$$
h(x) = 
x \left( 1 - \frac{K_c}{K_c + \frac{E}{1-E} x} \right) 
$$
is monotone increasing for $x\in[0,K_c]$ (as a product of two non-negative increasing functions) from $h(0)=0$ to $h(K_c)=E K_c > 0$. Let us choose $\varepsilon>0$ such that 
$h(x)>\varepsilon$ for $x \in [\frac{m}{2},K_c]$. From \eqref{def_H} we get $H (x)>\varepsilon$, $x \in [\frac{m}{2},K_c]$ as well. Define a positive $\displaystyle \delta \in \left( 0, \frac{\varepsilon}{4} \right)$ satisfying 
$$
\delta < \min\left\{ \frac{m}{2}, K_c - \frac{m}{2} \right\}
$$
such that for any $x,y \in [0,K_c]$, the inequality $|x-y| \leq \delta$ implies
$$ \left| F \left( x, e^{rT} \right) - F \left( y, e^{rT} \right) \right| \leq \frac{\varepsilon}{2}. $$
Such $\delta>0$ exists, as $F(x,e^{rT})$ defined in 
\eqref{F_def} is continuous and thus is uniformly continuous for $x\in[0,K_c]$.

By the definition of $m$ in \eqref{liminf} for any $\delta > 0$ there is $n_1 \geq n_0+k$ such that $x_n > m - \frac{\delta}{2}$ for any $n \geq n_1 - k$. By definition of $\liminf$, there is $n>n_1$ such that $x_n < m + \frac{\delta}{2}$. Also, by the choice of $n_1$, we have $x_{n-k} > m- \frac{\delta}{2}$ and thus $|x_n - x_{n-k}|<\delta$.
Further,
\begin{align*}
x_{n+1}  & =  F \left( x_n, e^{rT} \right) - E F \left( x_{n-k}, e^{rT} \right) 
\\ & 
=  F \left( x_n, e^{rT}, \right) - E F \left( x_n, e^{rT} \right) 
+ E \bigg[ F \left( x_n, e^{rT}  \right) -  F \left( x_{n-k}, e^{rT}  \right) \bigg] \\
& = x_n - H(x_n) + E \bigg[ F \left( x_n, e^{rT}  \right) -  F \left( x_{n-k}, e^{rT}   \right) \bigg] 
\\ & 
\leq x_n -h(x_n) + E \bigg| F \left( e^{rT},x_n \right) -  F \left( x_{n-k}, e^{rT}  \right) \bigg|
\\
& < x_n - \varepsilon + E \frac{\varepsilon}{2} < x_n - \varepsilon +  \frac{\varepsilon}{2} 
 = x_n -  \frac{\varepsilon}{2} < x_n - 2 \delta \leq m +\delta - 2 \delta = m -\delta < m- \frac{\delta}{2},
\end{align*}
which contradicts to the assumption that $x_n> m-\delta/2$ for any $n \geq n_1-k$. 
Thus the scenario described by \eqref{liminf} is impossible, and the solution either coincides with zero, 
starting at some $x_j$, or tends to zero.
\end{proof}

\begin{theorem}
\label{contin:zeroonly}
If \eqref{zeroonly} holds, all solutions of \eqref{Impulse:Delay} tend to zero.
\end{theorem}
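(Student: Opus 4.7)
The plan is to lift the discrete-time convergence of Lemma~\ref{theorem:onlyzero} to the continuous solution using the explicit inter-impulse formula \eqref{Impulse:Sol}. By Lemma~\ref{Lem:DiffSol}, the post-impulse values $x_n = N(nT^+)$ satisfy the difference equation \eqref{Difference:k}; by Lemma~\ref{lemma_bounds}, there exists $n_0 \geq k$ such that $x_n \in [0,K_c]$ for all $n \geq n_0$; and under assumption \eqref{zeroonly}, Lemma~\ref{theorem:onlyzero} yields $x_n \to 0$ as $n \to \infty$.

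It remains to control $N(t)$ on each interval $(nT,(n+1)T]$. For $n \geq n_0$, the restriction of $N$ to this interval is a solution of the logistic equation with initial value $x_n \in [0,K_c]$, so by standard monotonicity of the logistic flow below carrying capacity, $N$ is non-decreasing on the interval and
\begin{equation*}
0 \leq N(t) \leq N((n+1)T) = F\!\left(x_n,e^{rT}\right), \qquad t \in (nT,(n+1)T],
\end{equation*}
where $F$ is the map defined in \eqref{F_def}. Since $F(\cdot,e^{rT})$ is continuous with $F(0,e^{rT}) = 0$, the bound $F(x_n,e^{rT}) \to 0$ follows from $x_n \to 0$, and therefore $\sup_{t \in (nT,(n+1)T]} N(t) \to 0$, which gives $N(t) \to 0$ as $t \to \infty$.

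I do not anticipate any real obstacle: Lemma~\ref{theorem:onlyzero} carries the full analytic burden, and the passage from the sampled sequence to the continuous trajectory is immediate from monotonicity of the logistic flow and continuity of $F$ at zero. The only minor point is that on the finitely many inter-impulse intervals before the iterates enter $[0,K_c]$ the bound above does not apply directly, but on these intervals $N(t)$ is bounded by the corresponding logistic solution from the given initial data, so they contribute no asymptotic obstruction.
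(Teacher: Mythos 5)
Your proposal is correct and follows essentially the same route as the paper: Lemma~\ref{theorem:onlyzero} gives $N(nT^+)\to 0$, and the explicit inter-impulse formula \eqref{Impulse:Sol} transfers this to $N(t)\to 0$ on each interval. You simply spell out the monotonicity/continuity details that the paper leaves implicit.
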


\begin{proof}
By Lemma~\ref{theorem:onlyzero}, if \eqref{zeroonly} holds then all solutions of the difference equation tend to zero. This implies that $N(nT^+)\to 0$, and by \eqref{Impulse:Sol} the solution of \eqref{Impulse:Delay} $N(t)$ on $(nT,(n+1)T)$ also tends to zero. Therefore all solutions of \eqref{Impulse:Delay} tend to zero.
\end{proof}


By 
Theorem~\ref{contin:zeroonly}, it is sufficient to consider only the case 
\begin{equation}
\label{3:LessHalfStable}
E< 1 - e^{-rT}.
\end{equation}
If this condition is not satisfied, all solutions tend to zero. Everywhere below we assume that \eqref{3:LessHalfStable} holds.

Next, let us focus on the behaviour of the difference equation when the harvesting is delayed by a single time period $k=1$. This will lead to the second-order difference equation, and allow us to apply necessary and sufficient results such as Lemma~\ref{Cor:CritK1} and \ref{Lemma:OnUnitDisk} to obtain explicit conditions for stability of the difference equation.

\begin{lemma}
\label{Stability:k1}
Let $k=1$. 
If $E \in (0,1/2]$ then there exists a positive equilibrium of \eqref{Difference:k} which is locally asymptotically stable.

For $E\in (1/2,1)$, if
\begin{equation}
\label{3:MoreHalfUnstable}
rT < -\ln\bigg(\frac{(1-E)^2}{E} \bigg)
\end{equation}
then the positive equilibrium of \eqref{Difference:k} is locally unstable, while if
\begin{equation}
\label{3:MoreHalfStable}
rT > - \ln\bigg( \frac{(1-E)^2}{E}\bigg),
\end{equation}
the positive equilibrium of \eqref{Difference:k} is locally asymptotically stable.
\end{lemma}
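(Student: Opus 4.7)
The plan is to exploit the reduction from Lemma~\ref{connection}: local asymptotic stability of the solution of \eqref{Impulse:Delay} associated with $x^*$ is equivalent to local asymptotic stability of the equilibrium $x^*$ given by \eqref{pos:equil} for the second-order recursion \eqref{Difference:k}. Writing the right-hand side of \eqref{Difference:k} as $\max\{f(u,v),0\}$ with $f(u,v)=F(u,e^{rT})-E\,F(v,e^{rT})$ and $F$ as in \eqref{F_def}, I observe that $f(x^*,x^*)=x^*>0$, so the truncation is inactive in a neighbourhood of $(x^*,x^*)$ and the smooth $f$ can be linearized. The Schur--Cohn-type criterion of Lemma~\ref{Cor:CritK1} then applies directly to the resulting quadratic characteristic polynomial.

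The first concrete step is to compute the Jacobian of $f$ at the equilibrium. Substituting \eqref{pos:equil} into the denominator of $F$ produces the clean identity $K_c+x^*(e^{rT}-1)=K_c(1-E)e^{rT}$, after which the derivative formula $\partial F/\partial x=K_c^2 e^{rT}/(K_c+x(e^{rT}-1))^2$ used in the proof of Lemma~\ref{connection} gives
\[
p_0:=f_u(x^*,x^*)=\frac{1}{(1-E)^2 e^{rT}},\qquad p_1:=-f_v(x^*,x^*)=\frac{E}{(1-E)^2 e^{rT}}.
\]
Thus the linearized equation is $y_{n+1}-p_0 y_n+p_1 y_{n-1}=0$ with $p_0,p_1>0$, matching exactly the setting of Lemma~\ref{Cor:CritK1}, which reduces local asymptotic stability to the pair of inequalities $p_0-1<p_1<1$.

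Then comes a short algebraic comparison. Rearranging $p_0-1<p_1$ yields $e^{rT}>1/(1-E)$, which is precisely the standing hypothesis \eqref{3:LessHalfStable} and therefore holds automatically. Rearranging $p_1<1$ yields $rT>-\ln((1-E)^2/E)$, i.e., \eqref{3:MoreHalfStable}. A case split on the ratio of thresholds closes the argument: $E/(1-E)^2\le 1/(1-E)$ iff $E\le 1-E$ iff $E\le 1/2$. Consequently, for $E\le 1/2$ the standing assumption \eqref{3:LessHalfStable} already forces $p_1<1$ and stability follows, while for $E\in(1/2,1)$ the second threshold is the binding one and condition \eqref{3:MoreHalfStable} must be imposed separately. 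Finally, under \eqref{3:MoreHalfUnstable} one has $p_1>1$; since $p_1$ is the product of the two characteristic roots, at least one of them must have modulus greater than $1$, yielding instability.

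I do not anticipate a serious obstacle here, as the whole argument is a linearization followed by a comparison of two logarithmic thresholds. The only point requiring a moment of care is checking that the $\max$ in \eqref{Difference:k} plays no role near $(x^*,x^*)$, which is immediate from $x^*>0$ and continuity of $f$.
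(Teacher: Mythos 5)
Your proof is correct and follows essentially the same route as the paper's: linearize \eqref{Difference:k} at $x^*$, obtain $p_0=e^{-rT}/(1-E)^2$ and $p_1=Ee^{-rT}/(1-E)^2$, apply Lemma~\ref{Cor:CritK1}, and observe that $p_0-1<p_1$ is exactly \eqref{3:LessHalfStable} while $p_1<1$ is exactly \eqref{3:MoreHalfStable}, with the case split at $E=1/2$ deciding which threshold binds. The one genuine (and welcome) deviation is the instability step: you get a root of modulus strictly greater than one directly from $p_1=\lambda_1\lambda_2>1$, which is more elementary than the paper's argument, which first concludes $\max\{|\lambda_1|,|\lambda_2|\}\geq 1$ from the failure of the Schur--Cohn condition and then invokes Lemma~\ref{Lemma:OnUnitDisk} to rule out a root on the unit circle; your explicit remark that the $\max$ truncation is inactive near $(x^*,x^*)$ is also a point the paper leaves implicit.
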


\begin{proof}
As \eqref{3:LessHalfStable} holds, 
there exists a unique positive equilibrium $x^*$. 

Let $k=1$, then \eqref{Difference:k} has the form $x_{n+1}=\max\{ f(x_n,x_{n-1}), 0\}$, and the linearized equation around $x^*$ is
\begin{equation*}
    u_{n+1} = p_0 u_{n} - p_1 u_{n-1},
\end{equation*}
where
\begin{equation*}
\begin{aligned}
    p_0 &= \frac{\partial f}{\partial x_{n}}(x^*,x^*) = \frac{K^2 e^{rT}}{(K + x_n(e^{rT}-1))^2} \bigg|_{(x^*,x^*)} = \frac{e^{-rT}}{(1-E)^2} \, ,\\
    p_{1} &=  - \frac{\partial f}{\partial x_{n-1}}(x^*,x^*) = E \frac{K^2 e^{rT}}{(K + x_{n-1}(e^{rT}-1))^2} \bigg|_{(x^*,x^*)} = \frac{E e^{-rT}}{(1-E)^2}.
\end{aligned}
\end{equation*}
The characteristic equation of the linearized equation is $\lambda^2 - p_0 \lambda + p_1 = 0$.
By Lemma~\ref{Cor:CritK1}, for the roots of the characteristic equation to lie inside the unit disc, we only require $p_1<1$ (the left inequality $p_0 - 1< p_1$ is automatically satisfied for all $rT>-\ln(1-E)$), the inequality has the form
\begin{equation*}
\label{K1:Conditions}
rT > -\ln\bigg(\frac{(1-E)^2}{E}\bigg),
\end{equation*}
which coincides with \eqref{3:MoreHalfStable}.

If $E\in (0,1/2]$ then $-\ln((1-E)^2/E) \leq -\ln(1-E)$, and 
therefore by \eqref{3:LessHalfStable}, the positive equilibrium $x^*$ is locally asymptotically stable, as \eqref{3:LessHalfStable} implies \eqref{3:MoreHalfStable}.

If $E\in ( 1/2,1)$, 
\eqref{3:MoreHalfStable} implies that the positive equilibrium $x^*$ exists and is locally asymptotically stable.

When $E \in (1/2,1)$,  if $-\ln(1-E) < rT \leq  - \ln( (1-E)^2/E)$, the positive equilibrium exists and the roots of the characteristic equation satisfy $\max\{|\lambda_1|,|\lambda_2|\}\geq 1$. By Lemma~\ref{Lemma:OnUnitDisk},  a root of the characteristic equation satisfies $|\lambda| = 1$ if and only if $rT = -\ln((1-E)^2/E)$ on this interval. 
Really, in this case $p_1=1$ and $0<p_0= \frac{1}{E}<2$.
Therefore for $-\ln(1-E)< rT < -\ln((1-E)^2/E)$, which corresponds to \eqref{3:MoreHalfUnstable}, we get $\max\{|\lambda_1|,|\lambda_2|\} > 1$ and by linearization the equilibrium is unstable.

\end{proof}

Lemmata~\ref{Stability:k1} and \ref{connection} immediately imply

\begin{theorem}
\label{Thm:PerSolK1}
Let k=1. If $rT > - \ln\bigg( \frac{(1-E)^2}{E}\bigg)$, 
then there exists a unique positive periodic solution $N^*(t)$ of \eqref{Impulse:Delay} with 
\begin{equation}
\label{NSol}
N^* (nT^+) = x^* = \frac{((1-E)e^{rT}-1)K_c}{e^{rT}-1} \,,
\end{equation}
and this solution is locally asymptotically stable.
\end{theorem}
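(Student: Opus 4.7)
The plan is to combine Lemma~\ref{Stability:k1}, which establishes local asymptotic stability of the positive equilibrium $x^*$ of the discrete system \eqref{Difference:k} for $k=1$, with Lemma~\ref{connection}, which transfers local asymptotic stability of a fixed point $x^*$ of \eqref{Difference:k} to the corresponding periodic solution $N^*(t)$ of the impulsive system \eqref{Impulse:Delay}.

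First I would verify that the hypothesis $rT > -\ln\left((1-E)^2/E\right)$ guarantees the positive equilibrium $x^*$ of \eqref{Difference:k} given by \eqref{pos:equil} is well-defined. Comparing $(1-E)^2/E$ with $1-E$, one checks that for $E>1/2$ the stated hypothesis already forces $rT>-\ln(1-E)$, so $x^*>0$ by \eqref{pos:equil}; for $E\in(0,1/2]$ Lemma~\ref{Stability:k1} directly asserts existence and local asymptotic stability of $x^*$ under the standing assumption \eqref{3:LessHalfStable}, which the hypothesis of the theorem is consistent with.

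Next I would produce the candidate periodic solution by defining $N^*(nT^+)=x^*$ for every $n\in\mathbb{N}$ and extending $N^*$ to each interval $(nT,(n+1)T)$ by the closed-form \eqref{Impulse:Sol} from Lemma~\ref{Lem:DiffSol}. Because $x^*$ is a fixed point of the map underlying \eqref{Difference:k}, one has $N^*((n+1)T)-E\,N^*(nT)=x^*$ for all $n$, so the impulsive condition of \eqref{Impulse:Delay} is satisfied and $N^*$ is $T$-periodic. Uniqueness of such a positive periodic solution is inherited from uniqueness of the positive equilibrium $x^*$ of \eqref{Difference:k}.

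Finally, Lemma~\ref{Stability:k1} with $k=1$ asserts that $x^*$ is locally asymptotically stable as an equilibrium of \eqref{Difference:k} under the assumed inequality (automatically for $E\le 1/2$, and by \eqref{3:MoreHalfStable} for $E>1/2$). Lemma~\ref{connection} then translates this into local asymptotic stability of the periodic solution $N^*$ of \eqref{Impulse:Delay}, completing the proof. I do not anticipate any genuine obstacle: the nontrivial work already resides in Lemmas~\ref{Stability:k1} and \ref{connection}, and the theorem is essentially a restatement of their conclusions in the language of the impulsive differential equation.
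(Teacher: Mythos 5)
Your proposal is correct and follows the same route as the paper, which derives Theorem~\ref{Thm:PerSolK1} directly as an immediate consequence of Lemma~\ref{Stability:k1} (stability of $x^*$ for the $k=1$ difference equation) combined with Lemma~\ref{connection} (transfer of stability to the impulsive system). The extra verification you supply of the periodic solution's construction via \eqref{Impulse:Sol} is consistent with the paper's framework and adds no new difficulty.
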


While Theorem~\ref{Thm:PerSolK1} is similar in many ways to the result of \cite{Zhang2003} 
cited in Lemma~\ref{ZhangThm}, here we do not observe attractivity of the solution for all initial conditions, implying that the solution is attractive but not globally attractive. Even if the equilibrium is locally asymptotically stable, there are sets of initial values leading to its instability, as further numerical examples illustrate.

\begin{remark}
\label{rem:extinction}
Even for $k=1$ and any $x_{0}$, there is a domain of initial values $x_1$ guaranteeing immediate extinction. Really, as 
$\displaystyle F(x,e^{rT})$ defined in \eqref{F_def}
is strictly increasing in $x$, $F(0,e^{rT})=0$, there are values of $x_1<x_{0}$ such that
$$
F \left( x_1,e^{rT} \right) = \frac{K_c x_1 e^{rT}}{K_c+x_1 (e^{rT}-1)} \leq \frac{E K_c x_{0} e^{rT}}{K_c+x_{0}(e^{rT}-1)}
= E F\left( x_{0} ,e^{rT} \right),
$$
leading to $x_2=x_3 = \dots = 0$.
\end{remark}

The following results extends Lemma~\ref{Stability:k1} to all $k\in {\mathbb N}$, however,
for a given $k$, we have implicit, but easily verifiable conditions connecting $rT$ with $E$. 

\begin{lemma}
\label{Cor:AnyE}
The positive equilibrium $x^*$ of difference equation \eqref{Difference:k} is asymptotically stable
if and only if both inequalities hold
\begin{equation}
\label{spec_stab_cond}
\frac{e^{-rT}}{(1-E)^2} < \frac{k+1}{k}, \quad \cos(\theta^*)< \frac{e^{-rT}(1+E)}{2 (1-E)} + \frac{(1-E)^2 e^{rT}}{2} \, , 
\end{equation}
where $\theta^*$ is a solution of 
\begin{equation}
\label{theta_1}
\frac{\sin(k\theta)}{\sin((k+1)\theta)} = (1-E)^2 e^{rT}, \quad \theta \in \left( 0, \frac{\pi}{k+1} \right).
\end{equation}
\end{lemma}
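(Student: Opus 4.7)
The plan is to reduce the stability question for the positive equilibrium $x^*$ of \eqref{Difference:k} to an application of Corollary~\ref{Cor:MoreGeneralk} via linearization. Writing the recurrence as $x_{n+1}=f(x_n,x_{n-k})$ with $f(u,v)=F(u,e^{rT})-E\,F(v,e^{rT})$ for $F$ as in \eqref{F_def}, I would compute the two partial derivatives at $x^*$. Using the explicit formula \eqref{pos:equil}, one gets the simplifying identity $K_c+x^*(e^{rT}-1)=(1-E)^{-1}K_c\,e^{rT}$, so that $F_x(x^*,e^{rT})=e^{-rT}/(1-E)^2$. The linearized recurrence thus becomes
\[
u_{n+1}=p_0\,u_n - p_k\,u_{n-k},\qquad p_0=\frac{e^{-rT}}{(1-E)^2},\qquad p_k = E\,p_0.
\]

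Since $E\in(0,1)$, the relation $0<p_k<p_0$ holds automatically, so the hypothesis of Corollary~\ref{Cor:MoreGeneralk} is met and asymptotic stability of the linearization is equivalent to
\[
p_0 < \min\!\left\{p_k+1,\ \tfrac{k+1}{k}\right\} \quad\text{and}\quad p_k < \sqrt{p_0^2+1-2p_0\cos\theta^*},
\]
with $\theta^*\in(0,\pi/(k+1))$ determined by $\sin(k\theta)/\sin((k+1)\theta)=1/p_0$. Substituting $1/p_0=(1-E)^2 e^{rT}$ turns this defining equation into \eqref{theta_1} verbatim. Using $p_k=E p_0$, the inequality $p_0<p_k+1$ reduces to $(1-E)p_0<1$, i.e.\ $e^{-rT}<1-E$, which is exactly the standing hypothesis \eqref{3:LessHalfStable}; hence this part is redundant and the first condition collapses to $p_0<(k+1)/k$, which is the first inequality in \eqref{spec_stab_cond}.

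For the second condition, I would square (both sides are positive), substitute $p_k=E p_0$ to obtain $2p_0\cos\theta^*<(1-E^2)p_0^2+1$, then divide by $2p_0$ and plug in $p_0=e^{-rT}/(1-E)^2$; after using $(1-E^2)/(1-E)^2=(1+E)/(1-E)$, this is precisely the second inequality of \eqref{spec_stab_cond}. The transfer of stability between the linearization and the nonlinear recurrence is the standard principle of linearized stability for discrete dynamical systems: the ``$\max$ with zero'' in \eqref{Difference:k} is inactive in a neighbourhood of $x^*>0$, so the right-hand side is smooth there and, with the strict inequalities of \eqref{spec_stab_cond}, both directions of the iff pass through (asymptotic stability of the linearization gives local asymptotic stability of $x^*$, and conversely any violation of \eqref{spec_stab_cond} produces an eigenvalue strictly outside the unit disk, yielding instability). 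The main obstacle is purely algebraic bookkeeping: spotting the clean relation $p_k=E p_0$ is what makes the conditions of Corollary~\ref{Cor:MoreGeneralk} collapse to the stated form, and one must verify that $p_0<p_k+1$ is automatic under the running hypothesis to avoid a spurious-looking third inequality.
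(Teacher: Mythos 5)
Your proposal is correct and follows essentially the same route as the paper: linearize around $x^*$ to obtain $u_{n+1}=p_0u_n-p_ku_{n-k}$ with $p_0=e^{-rT}/(1-E)^2$, $p_k=Ep_0$, apply Corollary~\ref{Cor:MoreGeneralk}, observe that $p_0<p_k+1$ reduces to $e^{-rT}<1-E$ and is thus absorbed by the standing hypothesis \eqref{3:LessHalfStable}, and square the remaining inequality to arrive at \eqref{spec_stab_cond}. The only difference is cosmetic: you spell out why the truncation by $\max\{\cdot,0\}$ is inactive near $x^*$ and why linearized stability transfers in both directions, which the paper leaves implicit.
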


\begin{proof}
Linearizing the difference equation $x_{n+1} = \max\{f(x_n, x_{n-k}),0\}$ around the positive equilibrium $x^*$ given in \eqref{pos:equil} with
 \begin{equation*}
 f(x_n, x_{n-k}) = \frac{K_c x_{n}e^{rT}}{K_c + x_{n}(e^{rT}-1)} - E \frac{K_c x_{n-k}e^{rT}}{K_c + x_{n-k}(e^{rT}-1)},
 \end{equation*}
we get, similarly to the proof of Lemma~\ref{Stability:k1},
\begin{equation}
\label{flower1}
    u_{n+1} = p_0 u_{n} - p_k u_{n-k}, \quad p_0 = \frac{e^{-rT}}{(1-E)^2}, \quad  p_k = \frac{E e^{-rT}}{(1-E)^2}.
\end{equation}
%
By Corollary~\ref{Cor:MoreGeneralk}, the zero solution of the linearized equation is asymptotically stable if and only if the inequalities in \eqref{gen_stab_cond} hold, or equivalently,
$$
\frac{e^{-rT}}{(1-E)^2}  < \frac{k+1}{k}, \quad \frac{E e^{-rT}}{(1-E)^2} < \sqrt{ \frac{e^{-2rT}}{(1-E)^4} + 1 - 2
\frac{e^{-rT}}{(1-E)^2} \cos(\theta^*)} \, .
$$
Note that $p_0< p_k+1$, or $\displaystyle e^{-rT} < E e^{-rT} + (1-E)^2 $ is equivalent to $\displaystyle e^{-rT}< 1-E$, which is satisfied due to \eqref{3:LessHalfStable}.
The first inequality is the same as in \eqref{spec_stab_cond}, while computing the squares in the second one gives
\begin{equation*}
\frac{E^2 e^{-2rT}}{(1-E)^4} < \frac{e^{-2rT}}{(1-E)^4} + 1 - \frac{2 e^{-rT}}{(1-E)^2}\cos(\theta^*).
\end{equation*}
After rearranging, the desired result is acquired.
\end{proof}

Applied to \eqref{Impulse:Delay}, Lemmata~\ref{Cor:AnyE} and \ref{connection}  immediately imply a sharp local asymptotic stability result.

\begin{theorem}
\label{th:AnyE}
Let $E\in(0,1)$ and \eqref{3:LessHalfStable} be satisfied.
Then there exists a unique positive periodic solution $N^*(t)$ of \eqref{Impulse:Delay} given by \eqref{NSol}
which is asymptotically stable if and only if both inequalities in \eqref{spec_stab_cond} hold, where $\theta^*$ is a solution of 
\eqref{theta_1}.
\end{theorem}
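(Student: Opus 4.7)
The plan is to derive the theorem as a direct packaging of two earlier results: the discrete stability criterion in Lemma~\ref{Cor:AnyE} and the bridge lemma between discrete and impulsive stability, Lemma~\ref{connection}. First I would establish existence and uniqueness of the positive periodic solution. Under the standing hypothesis \eqref{3:LessHalfStable}, i.e.\ $E<1-e^{-rT}$, the difference equation \eqref{Difference:k} admits the unique positive fixed point $x^*$ of \eqref{pos:equil}, obtained by solving $f(x,x)=x$ with $f$ as in \eqref{Difference:k}. Any $T$-periodic solution $N^*(t)$ of \eqref{Impulse:Delay} must have a constant post-impulse value $N^*(nT^+)$, which is then forced to equal $x^*$; the closed-form expression \eqref{Impulse:Sol} in Lemma~\ref{Lem:DiffSol} then determines $N^*$ on each interval $(nT,(n+1)T)$ uniquely. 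This gives the unique periodic solution satisfying \eqref{NSol}.

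Second, I would translate stability from the discrete to the continuous setting. By Lemma~\ref{connection}, the local asymptotic stability of $N^*(t)$ as a solution of the impulsive delay system \eqref{Impulse:Delay} is equivalent to the local asymptotic stability of $x^*$ as an equilibrium of the $(k+1)$st-order difference equation \eqref{Difference:k}. Lemma~\ref{Cor:AnyE} then characterizes the latter: $x^*$ is asymptotically stable if and only if the two inequalities in \eqref{spec_stab_cond} hold, where $\theta^*$ is the unique solution of \eqref{theta_1} in $(0,\pi/(k+1))$. Chaining these two equivalences delivers the ``if and only if'' assertion of the theorem.

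Since the heavy lifting has been done in preceding lemmas, there is no substantive obstacle; the proof is essentially a one-paragraph combination. The only point that merits a brief remark is that \eqref{Difference:k} carries a $\max\{\,\cdot\,,0\}$, which is not smooth. However, because $x^*>0$ and $f$ is continuous with $f(x^*,x^*)=x^*>0$, in a sufficiently small neighbourhood of $(x^*,x^*)$ the argument $f(x_n,x_{n-k})$ is strictly positive, so \eqref{Difference:k} locally reduces to the smooth recursion $x_{n+1}=f(x_n,x_{n-k})$. This legitimizes the linearization \eqref{flower1} used in Lemma~\ref{Cor:AnyE} and confirms that the characterization applies verbatim to \eqref{Difference:k}, closing the argument.
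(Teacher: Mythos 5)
Your proposal is correct and follows exactly the paper's route: the theorem is stated there as an immediate consequence of Lemma~\ref{Cor:AnyE} (the sharp discrete criterion via Corollary~\ref{Cor:MoreGeneralk}) combined with Lemma~\ref{connection} (the equivalence of stability for \eqref{Difference:k} and \eqref{Impulse:Delay}). Your added remarks on uniqueness of the periodic solution and on the local irrelevance of the $\max\{\cdot,0\}$ truncation near $x^*>0$ are sound and, if anything, make the argument slightly more complete than the paper's one-line justification.
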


The following stability condition is delay-independent; moreover, it  applies if a constant $k$ in \eqref{Difference:k} is replaced with $k=k(n)$. It is also only sufficient, meaning there may still exist values of $rT< \ln(E + 1) - 2 \ln(1-E)$ such that the equilibrium is locally asymptotically stable.

\begin{theorem}
\label{th:contraction}
If
\begin{equation}
\label{Cond:k}
    rT > \ln(E + 1) - 2 \ln(1-E)
\end{equation}
then the positive periodic solution $N^*(t)$ of \eqref{Impulse:Delay} exists and corresponds to \eqref{NSol}, and this solution is locally asymptotically stable.
\end{theorem}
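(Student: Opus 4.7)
The plan is to reduce everything, as in the proofs of Lemmata~\ref{Stability:k1} and \ref{Cor:AnyE}, to a sum-of-coefficients stability test for the linearized difference equation, and to observe that condition \eqref{Cond:k} is exactly what Lemma~\ref{sufficient} demands. This will give a uniform, delay-independent bound, which is the distinctive feature of this result compared to Theorem~\ref{th:AnyE}.

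First I would verify that \eqref{Cond:k} implies the existence of the positive equilibrium, i.e.\ that \eqref{3:LessHalfStable} holds. Since $\ln(1+E) - 2\ln(1-E) > -\ln(1-E)$ is equivalent to $\ln(1+E) > \ln(1-E)$, which is true for any $E \in (0,1)$, the inequality \eqref{Cond:k} is strictly stronger than \eqref{3:LessHalfStable}, so $x^*$ given by \eqref{pos:equil} is well defined and the corresponding periodic solution \eqref{NSol} of \eqref{Impulse:Delay} exists.

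Next I would linearize \eqref{Difference:k} at $x^*$, exactly as in the proof of Lemma~\ref{Cor:AnyE}, obtaining \eqref{flower1}, namely
\begin{equation*}
u_{n+1} - p_0 u_n + p_k u_{n-k} = 0, \qquad p_0 = \frac{e^{-rT}}{(1-E)^2}, \quad p_k = \frac{E e^{-rT}}{(1-E)^2}.
\end{equation*}
Applying Lemma~\ref{sufficient}, in which the sum of absolute values of all coefficients (other than the leading one) must be less than $1$, the zero solution of this linearized equation is asymptotically stable provided $|p_0| + |p_k| < 1$. Substituting and simplifying,
\begin{equation*}
\frac{e^{-rT}}{(1-E)^2} + \frac{E\,e^{-rT}}{(1-E)^2} = \frac{(1+E)e^{-rT}}{(1-E)^2} < 1,
\end{equation*}
which is equivalent to $e^{-rT} < (1-E)^2/(1+E)$ and thus to \eqref{Cond:k}.

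Finally, asymptotic stability of the linearization gives local asymptotic stability of $x^*$ as an equilibrium of \eqref{Difference:k} (the $\max\{\cdot,0\}$ truncation is inactive near $x^*>0$, so the standard linearization principle applies), and Lemma~\ref{connection} transfers this back to local asymptotic stability of the periodic solution $N^*(t)$ of \eqref{Impulse:Delay}. I do not expect a real obstacle here: the only subtlety is pointing out that the bound is independent of $k$, so the same inequality \eqref{Cond:k} works for every delay and, as stated in the paragraph preceding the theorem, even for $k=k(n)$ varying in $n$, since Lemma~\ref{sufficient} is insensitive to the positions of the non-leading coefficients.
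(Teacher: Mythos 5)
Your proposal is correct and follows essentially the same route as the paper: verify that \eqref{Cond:k} implies \eqref{3:LessHalfStable} so the equilibrium exists, linearize to get \eqref{flower1}, apply Lemma~\ref{sufficient} via $|p_0|+|p_k| = (1+E)e^{-rT}/(1-E)^2 < 1$, and transfer back with Lemma~\ref{connection}. Your explicit check that \eqref{Cond:k} is strictly stronger than \eqref{3:LessHalfStable} is a small addition the paper leaves implicit.
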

\begin{proof}
If \eqref{Cond:k} holds, then so does \eqref{3:LessHalfStable}, and the positive equilibrium exists. 
Reducing \eqref{Impulse:Delay} to difference equation \eqref{Difference:k}, we once again linearize \eqref{Difference:k} around $x^*$ and obtain \eqref{flower1}, where by \eqref{Cond:k},
$$
|p_0| + |p_k| = \frac{1+E}{(1-E)^2} e^{-rT} <1.
$$

By Lemma~\ref{sufficient}, the equilibrium $x^*$, and by Lemma~\ref{connection} the positive periodic solution $N^*(t)$ of \eqref{Impulse:Delay} is locally asymptotically stable, once \eqref{Cond:k} holds.
\end{proof}

\section{MSY with Delay Impulsive Harvesting}
\label{sec:MSY}

Next, we proceed with the analysis of a maximum yield (MY) and a maximum sustainable yield (MSY). We recall that a yield is said to be sustainable if it corresponds to a solution that is at least locally asymptotically stable.

\begin{lemma}
\label{Lem:yield}
The yield of \eqref{Impulse:Delay} is associated to the solution $N^*(nT^+) = x^*$ of \eqref{Difference:k} and is given by 
\begin{equation}
\label{yield}
Y(E) = \frac{K_c E}{(1-E)T}\bigg(\frac{(1-E)e^{rT}-1}{e^{rT}-1} \bigg).
\end{equation}
This yield is a function of the harvesting effort $E$, it is an increasing function for $E\in(0,E_{opt})$ and is decreasing 
for $E\in(E_{opt},1)$.
\end{lemma}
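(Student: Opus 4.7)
The plan is to identify the yield as the long-term average harvested biomass per unit time along the periodic solution, to derive an explicit formula via the closed-form flow (9), and then to analyze monotonicity by differentiation.

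First, I would interpret the yield. On the periodic solution $N^*$ with $N^*(nT^+)=x^*$ of period $T$, the amount removed at each impulsive instant $nT$ is, by the impulsive condition in \eqref{Impulse:Delay}, equal to $E\,N^*((n-k)T)$. Because $N^*$ is $T$-periodic, $N^*((n-k)T)=N^*(nT)$ for every $n$, so the harvest per period is the constant $E\,N^*(nT)$, where $N^*(nT)$ is the pre-harvest value. Dividing by $T$ gives the yield $Y(E) = E\,N^*(nT)/T$.

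Second, I would turn this into the stated closed form. Applying the solution formula \eqref{Impulse:Sol} on $(nT,(n+1)T)$ with $N^*(nT^+)=x^*$ and taking $t=(n+1)T^-$ (which equals $N^*(nT)$ by periodicity) yields
\begin{equation*}
N^*(nT) \;=\; \frac{K_c e^{rT} x^*}{K_c + x^*(e^{rT}-1)}.
\end{equation*}
Substituting the explicit equilibrium $x^* = \frac{((1-E)e^{rT}-1)K_c}{e^{rT}-1}$ from \eqref{pos:equil} simplifies the denominator to $K_c(1-E)e^{rT}$, which cancels with the $e^{rT}$ in the numerator and gives the clean identity $N^*(nT) = x^*/(1-E)$. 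Multiplying by $E/T$ and substituting $x^*$ produces exactly \eqref{yield}.

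Third, for the monotonicity claim I would differentiate. Writing $a=e^{rT}$ and $g(E) = \frac{E((1-E)a-1)}{1-E}$, so that $Y(E) = \frac{K_c}{T(a-1)}\,g(E)$, a short quotient-rule computation collapses to
\begin{equation*}
g'(E) \;=\; a - \frac{1}{(1-E)^2}.
\end{equation*}
Thus $g'(E)=0$ exactly when $(1-E)^2 = e^{-rT}$, i.e.\ $E = 1-e^{-rT/2} = E_{opt}$, with $g'>0$ for $E<E_{opt}$ and $g'<0$ for $E>E_{opt}$. The main obstacle is a bookkeeping one rather than a conceptual one: verifying that the ratio defining $N^*(nT)/x^*$ really collapses to $1/(1-E)$, so that both the explicit form of $Y(E)$ and the particularly transparent derivative $e^{rT}-(1-E)^{-2}$ appear without spurious terms.
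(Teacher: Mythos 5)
Your proposal is correct and follows essentially the same route as the paper: identify the yield as $E\,N^*(nT)/T$, use $N^*(nT)=x^*/(1-E)$ to obtain \eqref{yield}, and differentiate the resulting expression, whose derivative reduces to $e^{rT}-(1-E)^{-2}$. The only cosmetic difference is that you obtain $N^*(nT)=x^*/(1-E)$ by substituting the equilibrium into the flow formula \eqref{Impulse:Sol}, whereas the paper reads it off directly from the periodic impulsive condition $N^*(nT^+)=(1-E)N^*(nT)$; both are valid and equivalent.
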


\begin{proof}
For an optimal $T$-periodic solution of \eqref{Impulse:Delay}, we get $N^*(nT^+) = N^*((n+1)T^+) = x^*$, where \eqref{NSol} holds.
Then the associated yield is 
\begin{equation*}
Y(E) = \frac{E N^*(nT)}{T} = \frac{E}{1-E}\cdot \frac{N^*(nT^+)}{T} = \frac{E}{1-E}\cdot ~ \frac{K_c}{T}\bigg(\frac{(1-E)e^{rT}-1}{e^{rT}-1} \bigg) .
\end{equation*}
Its derivative in $E$,
$$
Y'(E) = \frac{K_c}{T (e^{rT}-1)} \frac{d}{dE} \bigg[ E e^{rT} + 1 - \frac{1}{1-E} \bigg]
= \frac{K_c}{T (e^{rT}-1)} \bigg[  e^{rT} -  \frac{1}{(1-E)^2} \bigg].
$$
satisfies $Y'(E)>0$ for $(1-E)^2>e^{-rT}$, which is equivalent to $E\in(0,E_{opt})$, and $Y'(E)<0$ for $E\in(E_{opt},1)$.
\end{proof}

\begin{lemma}
\label{Thm:MaxYield}
The maximum yield (MY) for delayed impulsive harvesting model \eqref{Impulse:Delay} with $k\in {\mathbb N}$ is equal to the MY for non-delayed model \eqref{Impulse:NoDelay} with  
the optimal harvesting effort  $E_{opt} = 1 - e^{-rT/2}$ and  MY associated to  
\eqref{eq:optsol}. 
\end{lemma}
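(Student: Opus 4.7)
The plan is to observe that the yield formula derived in Lemma~\ref{Lem:yield} depends on $r$, $T$, $K_c$, and $E$ but \emph{not} on the delay order $k$. The reason is structural: for any $T$-periodic solution $N^*$ of \eqref{Impulse:Delay} one has $N^*(nT^+) = x^*$ for every $n$, so the delayed value $N^*((n-k)T)$ coincides with the undelayed value $N^*(nT)$. Consequently, the reduction leading to Lemma~\ref{Lem:yield} produces the same expression
\[
Y(E) = \frac{K_c E}{(1-E)T}\cdot \frac{(1-E)e^{rT}-1}{e^{rT}-1}
\]
as in the non-delayed case $k=0$ treated in Lemma~\ref{ZhangThm}.

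Next I would invoke the monotonicity statement already established in Lemma~\ref{Lem:yield}: $Y$ is strictly increasing on $(0,E_{opt})$ and strictly decreasing on $(E_{opt},1)$, where $E_{opt}=1-e^{-rT/2}$. This identifies $E_{opt}$ as the unique maximizer of $Y$ over the admissible interval, and it is the same as the optimal effort in the non-delayed model.

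Then I would substitute $E_{opt}$ into the equilibrium formula \eqref{NSol}. Using $1-E_{opt}=e^{-rT/2}$ together with the factorization $e^{rT}-1=(e^{rT/2}-1)(e^{rT/2}+1)$, a short calculation gives
\[
N^*(nT^+) = x^* = \frac{((1-E_{opt})e^{rT}-1)K_c}{e^{rT}-1} = \frac{K_c}{e^{rT/2}+1},
\]
which matches \eqref{eq:optsol}. Substituting the same $E_{opt}$ into $Y(E)$ produces
\[
Y(E_{opt}) = \frac{K_c(e^{rT/2}-1)}{T(e^{rT/2}+1)},
\]
identical to the MSY in \eqref{MSY}. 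Thus the MY for \eqref{Impulse:Delay} coincides with the MY for \eqref{Impulse:NoDelay}.

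I do not expect a genuine obstacle; the only conceptual care required is to note that MY is defined via a $T$-periodic optimal solution, so the $\max\{\cdot,0\}$ truncation in \eqref{Impulse:Delay} is inactive at the optimum (since $x^*>0$ under \eqref{3:LessHalfStable}), which is why the problem collapses to a scalar maximization blind to the delay. The sustainability of this optimum, by contrast, genuinely depends on $k$ and is governed by Theorems~\ref{th:AnyE} and \ref{th:contraction}; that is the content of the subsequent MSY discussion rather than of the present MY statement.
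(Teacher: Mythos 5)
Your proposal is correct and follows essentially the same route as the paper: invoke the monotonicity of $Y(E)$ from Lemma~\ref{Lem:yield} to identify $E_{opt}=1-e^{-rT/2}$ as the maximizer, then substitute to recover the yield value \eqref{MSY} and the periodic state \eqref{eq:optsol}. The extra remarks you add (the $T$-periodicity making the delayed term coincide with the undelayed one, and the truncation being inactive at the positive equilibrium) are sound clarifications of why the argument is delay-blind, but they do not change the substance of the paper's proof.
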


\begin{proof}
By Lemma \ref{Lem:yield}, MY is attained at $E = E_{opt}$ and has the value of yield per time
\begin{equation*}
MY_{delayed} = Y(E_{opt}) = \frac{K_c}{T}\bigg( \frac{e^{rT/2}-1}{e^{rT/2}+1}\bigg) = MY_{non-delayed}.
\end{equation*}
In addition, the periodic solution when $E = E_{opt}$ becomes $\displaystyle N^*(nT^+) = \frac{K_c}{e^{rT/2}+1}$,
as in \eqref{eq:optsol}.
\end{proof}

\begin{theorem}
\label{Thm:Stable}
The solution corresponding to the MY of \eqref{Impulse:Delay} 
satisfies \eqref{eq:optsol} with $E = E_{opt}$. 
MY is a MSY if and only if either $k=1$ or $k \geq 2$ and
\begin{equation}
\label{Cond:General}
 rT < - 2 \ln\bigg(1 - 2 \cos\bigg(\frac{k\pi}{2k + 1}\bigg) \bigg)\,.
\end{equation}
\end{theorem}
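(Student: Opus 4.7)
The first assertion--that the MY-solution is given by \eqref{eq:optsol} at $E=E_{opt}$--is exactly Lemma~\ref{Thm:MaxYield}, so the heart of the proof is deciding when this periodic solution is locally asymptotically stable. By definition MY becomes MSY precisely in that case, and by Lemma~\ref{connection} local asymptotic stability of the periodic solution of \eqref{Impulse:Delay} is equivalent to local asymptotic stability of the positive equilibrium $x^*$ of the difference equation \eqref{Difference:k}. The plan is therefore to substitute $E=E_{opt}=1-e^{-rT/2}$ into the linearization \eqref{flower1} computed inside the proof of Lemma~\ref{Cor:AnyE}, and then invoke a sharp stability criterion tailored to the resulting linear recurrence.

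The crucial algebraic simplification is $(1-E_{opt})^2=e^{-rT}$, which forces
\[
p_0=\frac{e^{-rT}}{(1-E_{opt})^2}=1, \qquad p_k=\frac{E_{opt}\,e^{-rT}}{(1-E_{opt})^2}=1-e^{-rT/2}=:q\in(0,1)\subset(0,2).
\]
Hence the linearization \eqref{flower1} collapses to $u_{n+1}=u_n-q\,u_{n-k}$, which is precisely the setting of Lemma~\ref{Lem:Generalk}. Before invoking it, it is worth a line to verify that \eqref{3:LessHalfStable} holds at $E=E_{opt}$, so that the positive equilibrium in fact exists; this reduces to $1-e^{-rT/2}<1-e^{-rT}$, automatic for $rT>0$.

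For $k=1$, Lemma~\ref{Lem:Generalk} yields unconditional asymptotic stability of the linearization, and Lemma~\ref{connection} transfers it back to asymptotic stability of $N^*(t)$; this is the ``$k=1$'' branch. For $k\ge 2$, the same lemma provides the \emph{iff} condition $q<2\cos(k\pi/(2k+1))$. Since $k/(2k+1)>1/3$ for $k\ge 2$, one has $2\cos(k\pi/(2k+1))\in(0,1)$, so $1-2\cos(k\pi/(2k+1))$ is strictly positive and less than $1$; the stability inequality may therefore be rearranged as $e^{-rT/2}>1-2\cos(k\pi/(2k+1))$, and applying $-2\ln$ to both sides gives exactly \eqref{Cond:General}. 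Failure of this strict inequality pushes an eigenvalue of the characteristic polynomial to or beyond the unit circle, so the linearization is not asymptotically stable, which supplies the ``only if'' direction via Lemma~\ref{connection}.

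The sole non-routine observation is the collapse $(1-E_{opt})^2=e^{-rT}$, which drives $p_0$ to the critical value $1$: this is precisely the borderline on which the generic Schur-type criteria in Lemma~\ref{Cor:AnyE} and Corollary~\ref{Cor:MoreGeneralk} become delicate, and routing the argument through the specialized Lemma~\ref{Lem:Generalk} for recurrences of the form $x_{n+1}=x_n-q\,x_{n-k}$ avoids any separate analysis of roots sitting on the unit circle.
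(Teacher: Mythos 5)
Your proposal is correct and follows essentially the same route as the paper: substitute $E=E_{opt}$ into the linearization \eqref{flower1} so that $p_0=1$ and $p_k=1-e^{-rT/2}$, apply Lemma~\ref{Lem:Generalk} to the resulting recurrence $u_{n+1}=u_n-q\,u_{n-k}$, and transfer the conclusion back to \eqref{Impulse:Delay} via Lemma~\ref{connection}. Your extra checks (that \eqref{3:LessHalfStable} holds at $E_{opt}$ and that $1-2\cos(k\pi/(2k+1))\in(0,1)$ so the logarithm in \eqref{Cond:General} is defined) are sound refinements of the same argument, not a different method.
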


\begin{proof}
By Lemma \ref{Lem:DiffSol} the solutions to \eqref{Impulse:Delay} satisfy \eqref{Difference:k}.
Linearizing the difference equation $x_{n+1} = f(x_n, x_{n-k})$ around the positive equilibrium $x^*$ given in \eqref{pos:equil}, we get \eqref{flower1}.

By Lemma~\ref{Thm:MaxYield}, the yield is maximal whenever $E = E_{opt} = 1 - e^{-rT/2}$, and the linearized difference equation becomes
\begin{equation*}
u_{n+1} = u_n - (1 - e^{-rT/2})u_{n-k}
\end{equation*}
with equilibrium \eqref{eq:optsol}. Then by Lemma~\ref{Lem:Generalk}, for $k=1$, as $1-e^{-rT/2} \in (0,1)$, the solution is asymptotically stable for any $E \in (0,1)$. 
For $k>1$,
the zero solution of the linearized equation is locally asymptotically stable if and only if
\begin{equation*}
1 - e^{-rT/2} < 2 \cos\bigg(\frac{k \pi}{2k + 1} \bigg).
\end{equation*}
The condition is equivalent to \eqref{Cond:General}, leading to local asymptotic stability for the positive equilibrium $x^* = K_c/(e^{rT/2}+1)$ of  \eqref{Difference:k}.
Finally, by Lemma~\ref{connection}, a solution of \eqref{Impulse:Delay} satisfying \eqref{eq:optsol} is locally asymptotically stable, once \eqref{Cond:General} is satisfied. By definition, a unique positive periodic solution $N^*(t)$  with \eqref{eq:optsol}, for either $k=1$ or both $k\geq 2$ and $rT$ satisfying \eqref{Cond:General}, leads to MSY.
\end{proof}

Unlike the non-delay case, there is a possibility that the maximum yield is not sustainable. To avoid extinction, the choice of harvesting efforts should be among those leading to a sustainable yield. The set of such efforts is non-empty, as the following statement guarantees.

\begin{theorem}
\label{Thm:MSYG}
Let $k \geq 2$ and
\begin{equation}
\label{E_star}
E^* = \frac{2+e^{-rT} - \sqrt{e^{-rT}(e^{-rT}+8)}}{2} \, .
\end{equation}
Then $E^*<E_{opt}$, and for any $E\in(0,E^*)$ the yield as given in \eqref{yield} is sustainable.
\end{theorem}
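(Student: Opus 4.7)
The plan is to recognize $E^{*}$ as the smaller root of a quadratic in $E$ that encodes exactly the sufficient stability condition \eqref{Cond:k} from Theorem~\ref{th:contraction}. Writing $a = e^{-rT} \in (0,1)$, the inequality \eqref{Cond:k} can be rearranged to
\begin{equation*}
(1+E)\,e^{-rT} < (1-E)^{2} \quad \Longleftrightarrow \quad Q(E) := E^{2} - (2+a)\,E + (1-a) > 0 .
\end{equation*}
So the first step is to do this elementary algebraic rearrangement, then apply the quadratic formula to $Q(E) = 0$ and observe that its two roots are
\begin{equation*}
E^{*} = \frac{2 + a - \sqrt{a(a+8)}}{2}, \qquad E^{**} = \frac{2 + a + \sqrt{a(a+8)}}{2},
\end{equation*}
matching the definition \eqref{E_star}.

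Next I would check that $0 < E^{*} < 1 < E^{**}$ so that the region $\{Q(E) > 0\} \cap (0,1)$ reduces to $(0, E^{*})$. Positivity $E^{*} > 0$ follows from $(2+a)^{2} - (a^{2}+8a) = 4 - 4a > 0$ since $a < 1$. That $E^{**} > 1$ follows from $Q(1) = -2a < 0$, so one root lies above $1$.

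Then to show $E^{*} < E_{opt} = 1 - \sqrt{a}$, I would manipulate
\begin{equation*}
E^{*} < 1 - \sqrt{a} \iff a + 2\sqrt{a} < \sqrt{a(a+8)} = \sqrt{a}\,\sqrt{a+8},
\end{equation*}
divide through by $\sqrt{a} > 0$, and square to reduce to $4\sqrt{a} < 4$, which again holds because $a < 1$. Both squarings are justified since both sides are positive (the left-hand side $\sqrt{a}+2 > 0$ is clear).

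Finally, for the sustainability conclusion: for any $E \in (0, E^{*})$ we have $Q(E) > 0$, hence \eqref{Cond:k} holds, and Theorem~\ref{th:contraction} guarantees local asymptotic stability of the positive $T$-periodic solution \eqref{NSol}. Since $E \in (0, E^{*}) \subset (0, E_{opt}) \subset (0,1)$, the yield $Y(E)$ from \eqref{yield} is well defined and, by definition, sustainable. I expect no real obstacle here; the only thing to be careful about is the double squaring in the $E^{*} < E_{opt}$ step, which is why I would verify positivity of both sides explicitly before squaring. Everything else is a direct chain of equivalences between the definition of $E^{*}$ and the sufficient condition from the earlier stability theorem.
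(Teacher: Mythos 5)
Your proposal is correct and follows essentially the same route as the paper: both identify $E^*$ as the smaller root of the quadratic $E^2-(2+e^{-rT})E+(1-e^{-rT})$ coming from the delay-independent sufficient condition \eqref{Cond:k} of Theorem~\ref{th:contraction}, verify $E^*<E_{opt}$ by a squaring argument equivalent to $\sqrt{e^{-rT}(e^{-rT}+8)}>e^{-rT}+2e^{-rT/2}$, and conclude sustainability on $(0,E^*)$. Your explicit check that the larger root exceeds $1$ (via $Q(1)<0$) is a small tidy addition the paper leaves implicit, but the argument is the same.
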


\begin{proof}
Let us note that, first, $E^*$ defined in \eqref{E_star} is positive and, as
$4 e^{-rT/2} > 4 e^{-rT}$, we have $$\sqrt{e^{-rT}(e^{-rT}+8)} >  e^{-rT} + 2 e^{-rT/2}$$
leading to $E^*< E_{opt} = 1-e^{-rT/2}$.

For a fixed $E \in (0,1)$, we get a solution with $N^*(nT^+) = x^*$, corresponding to a yield as given in \eqref{yield}. As justified earlier, $Y(E)$ is an increasing function of $E$ for $E\in(0,E_{opt})$.

By Theorem \ref{th:contraction} and Lemma \ref{connection}, the solution is locally asymptotically stable for any $k$ if $\displaystyle e^{rT}>\frac{E+1}{(1-E)^2}$, which is equivalent to
$E^2 - (2+e^{-rT})E + 1 - e^{-rT}>0$. The quadratic inequality is also satisfied if $E \in (0,E^*)$, meaning that for any $E\in(0,E^*)$ the yield is sustainable, which concludes the proof.
\end{proof}

\begin{lemma}
\label{Thm:LessSus}
If for some choice of $E_s \in(0,E_{opt}]$ the associated yield is sustainable, then for any $E\in(0,E_s]$ the yield associated with $E$ is also sustainable.
\end{lemma}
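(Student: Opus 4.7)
The plan is to invoke the sharp stability criterion of Theorem~\ref{th:AnyE}, which characterizes sustainability at a given $E$ by the two inequalities in \eqref{spec_stab_cond}. It then suffices to show that each inequality, holding at $E_s \in (0, E_{opt}]$, is preserved for every $E \in (0, E_s]$; that is, stability of the linearized difference equation depends monotonically on $E$ on this range.

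The first inequality, $e^{-rT}/(1-E)^2 < (k+1)/k$, is trivially preserved, as its left-hand side is strictly increasing in $E$. The second inequality, which can be written as $\cos\theta^*(E) < g(E)$ with $g(E) := \frac{e^{-rT}(1+E)}{2(1-E)} + \frac{(1-E)^2 e^{rT}}{2}$ and $\theta^*(E) \in (0, \pi/(k+1))$ defined by \eqref{theta_1}, requires verifying that both sides move in compatible directions as $E$ decreases. For the left-hand side, one establishes that $\theta \mapsto \sin(k\theta)/\sin((k+1)\theta)$ is strictly increasing on $(0, \pi/(k+1))$ by rewriting the numerator of its derivative as $\tfrac{1}{2}[(2k+1)\sin\theta - \sin((2k+1)\theta)]$ and invoking $\sin(n\theta) \le n\sin\theta$ for $\theta\in(0,\pi/n)$ together with the sign of $\sin((2k+1)\theta)$ beyond $\theta=\pi/(2k+1)$; since the right-hand side of \eqref{theta_1} is $(1-E)^2 e^{rT}$, which decreases in $E$, this forces $\theta^*(E)$ to decrease and $\cos\theta^*(E)$ to increase as $E$ grows. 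For the right-hand side, direct differentiation yields $g'(E) = \frac{e^{-rT}}{(1-E)^2} - (1-E)e^{rT}$, which is negative precisely when $E < 1 - e^{-2rT/3}$; since $E_{opt} = 1 - e^{-rT/2} < 1 - e^{-2rT/3}$ for $rT > 0$, $g$ is strictly decreasing on $(0, E_{opt}]$.

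Chaining these monotonicities, for every $E \in (0, E_s]$ one obtains $\cos\theta^*(E) \le \cos\theta^*(E_s) < g(E_s) \le g(E)$, so both inequalities in \eqref{spec_stab_cond} hold at $E$. Theorem~\ref{th:AnyE} combined with Lemma~\ref{connection} then delivers local asymptotic stability of the corresponding positive periodic solution, and because $E_{opt} < 1 - e^{-rT}$ guarantees the positive equilibrium $x^*$ exists, $Y(E)$ is well-defined and sustainable. The main obstacle is analysis of the second inequality: establishing strict monotonicity of the implicitly defined $\theta^*(E)$ and locating the sign change of $g'$ both require short but nontrivial computations, while the remaining steps amount to bookkeeping.
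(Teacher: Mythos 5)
Your proposal is correct and follows essentially the same route as the paper: reduce sustainability to the two inequalities in \eqref{spec_stab_cond}, then show that as $E$ decreases from $E_s$ the left side $\cos\theta^*(E)$ decreases (via monotonicity of $\sin(k\theta)/\sin((k+1)\theta)$ on $(0,\pi/(k+1))$ and implicit differentiation of \eqref{theta_1}) while the right side increases (via the sign of its derivative for $E\le E_{opt}<1-e^{-2rT/3}$). The only differences are cosmetic: your product-to-sum identity $(2k+1)\sin\theta-\sin((2k+1)\theta)>0$ replaces the paper's bound $\cos((k+1)\theta)\le 1$ in establishing the trigonometric monotonicity, and you preserve the first inequality by monotonicity in $E$ rather than by the paper's direct estimate at $E_{opt}$.
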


\begin{proof}
If the yield associated with $E_s$ is sustainable, the associated solution with $N^*(nT^+)=x^*$ is locally asymptotically stable.
By Lemma \ref{Cor:AnyE}, both inequalities in \eqref{spec_stab_cond} must hold for $E_s$, where $\theta^*$ is a root of \eqref{theta_1}.

Since for any $E\leq E_{opt}$, 
\begin{equation*}
\frac{e^{-rT}}{(1-E)^2}\leq \frac{e^{-rT}}{(1-E_{opt})^2} = 1 < \frac{k+1}{k},
\end{equation*}
it is clear that the first inequality in \eqref{spec_stab_cond} is satisfied for any $E\in(0,E_{opt}]$. Thus we can turn our attention to the second inequality, denote its right-hand side for a fixed $rT$ as $h_1$,
\begin{equation}
\label{def_h1}
h_1(E) := \frac{(1+E)e^{-rT}}{2(1-E)} + \frac{(1-E)^2e^{rT}}{2} \, ,
\end{equation}
and the left-hand side in \eqref{theta_1} as
\begin{equation}
\label{def_h2}
h_2(\theta(E)) = \frac{\sin(k \theta(E))}{\sin ((k+1)\theta(E))}, \quad \theta \in I = (0,\pi/(k+1)).
\end{equation}
We have from \eqref{theta_1},
$$
h_2^{\prime} (\theta(E)) \frac{d \theta}{dE} = \frac{d \theta}{dE} \left[ (1-E)^2 e^{rT} \right] = -2(1-E) e^{rT} < 0.
$$
Also,
\begin{equation*}
\begin{aligned}
h_2^{\prime}(\theta) &= \frac{k \cos(k \theta) \sin((k+1)\theta) - (k+1)\cos((k+1)\theta)\sin(k\theta)}{\sin^2((k+1)\theta)}\\
&= k \frac{\sin((k+1)\theta)\cos(k\theta) - \sin(k\theta)\cos((k+1)\theta)}{\sin^2 ((k+1)\theta) }  - \frac{\cos((k+1)\theta)\sin(k\theta)}{\sin^2((k+1)\theta)}\\
&= k\frac{\sin(\theta)}{\sin^2 ((k+1)\theta) } - \frac{\cos((k+1)\theta)\sin(k\theta)}{\sin^2 ((k+1)\theta) } \\
&= \frac{\sin(\theta)}{\sin^2 ((k+1)\theta) }\bigg( k - \frac{\cos((k+1)\theta)\sin(k \theta)}{\sin(\theta)} \bigg)
\end{aligned}
\end{equation*}
Now, $\sin(\theta)/\sin((k+1)\theta)^2 > 0$ $\forall \theta \in I$, and we will show that $k\sin(\theta) - \cos((k+1)\theta)\sin(k \theta) > 0$ leading to $h_2^{\prime}(\theta) > 0$.
Since $\cos((k+1)\theta)\leq 1$,
 $$k \sin(\theta)- \cos((k+1)\theta)\sin(k \theta) > k \sin(\theta) - \sin(k \theta) =: H_1(\theta),$$
where $H_1(0) = 0$ and $H_1^{\prime}(\theta) = k(\cos(\theta) - \cos(k\theta))>0$ since $\cos(\theta)$ is decreasing for all $\theta \in (0,\pi)$ and for $\theta\in I$, both $\theta$ and $k \theta$ are in $(0,\pi)$. Thus $H_1(\theta)>0$ for $\theta \in I$.
 
 Since $h_2^{\prime}(\theta) > 0$, the inequality $\displaystyle h_2^{\prime} (\theta(E)) \frac{d \theta}{dE} <0$ implies 
 $\displaystyle \frac{d \theta}{d E} < 0$. 
 Thus $\theta(E)$ decreases, and  $\cos(\theta(E))$ increases in $E$ as well. Further,
 $$
 h^{\prime}_1(E) = \frac{e^{-rT}}{(1-E)^2} - (1-E)e^{rT} < 0
 $$
 for $E < 1 - e^{-2 rT/3}$. By our assumption, $E\leq E_{opt} = 1 - e^{- rT/2} < 1 - e^{-2 rT/3}$, therefore $h^{\prime}_1(E)<0$, and $h_1(E)$ decreases in E.
 
 Since the yield associated with $E_s$ is sustainable, we have
 $$
 \cos(\theta(E_s)) < h_1 (E_s).
 $$
 Since $\cos(\theta(E))$ decreases, and $h_1(E)$ increases for decreasing $E$, then for any $E\leq E_s$
 $$
 \cos(\theta(E)) < h_1(E),
 $$
 and the second inequality in \eqref{spec_stab_cond} is satisfied.
 
Since both inequalities in \eqref{spec_stab_cond} are satisfied for $E\leq E_s$, the solution associated with $E$ with $N^*(nT^+) = x^*$ is locally asymptotically stable, and the yield associated with $E$ is sustainable.
\end{proof}

\begin{corollary}
Let $k=1$, then for any $E\in(0,E_{opt}]$ the yield is sustainable.
\end{corollary}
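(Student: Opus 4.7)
The plan is to chain together two results already established in the paper: Theorem~\ref{Thm:Stable}, which handles stability at the optimal effort $E_{opt}$ when $k=1$, and Lemma~\ref{Thm:LessSus}, which propagates sustainability downward from any working effort.

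First, I would invoke Theorem~\ref{Thm:Stable}. For $k=1$, the statement asserts that the MY is automatically a MSY with no extra condition on $rT$, so the solution $N^*$ with $N^*(nT^+) = K_c/(e^{rT/2}+1)$ corresponding to $E = E_{opt} = 1 - e^{-rT/2}$ is locally asymptotically stable, and hence the yield at $E_{opt}$ is sustainable.

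Next, I would apply Lemma~\ref{Thm:LessSus} with the choice $E_s = E_{opt}$. Since the yield at $E_{opt}$ is sustainable, the lemma guarantees that the yield is sustainable for every $E \in (0, E_{opt}]$, which is exactly the conclusion sought.

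There is essentially no obstacle to overcome here; the corollary is a direct consequence of combining the two prior statements, and the only thing to check is that the hypothesis $E_s \in (0, E_{opt}]$ of Lemma~\ref{Thm:LessSus} is met by taking $E_s = E_{opt}$, which it is by construction. The full write-up is just two sentences invoking these references.
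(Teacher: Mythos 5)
Your proposal is correct and matches the paper's own argument exactly: the paper states that the corollary ``follows immediately from Lemma~\ref{Thm:LessSus} and Theorem~\ref{Thm:Stable},'' which is precisely the chain you describe (stability at $E_{opt}$ for $k=1$ from Theorem~\ref{Thm:Stable}, then downward propagation via Lemma~\ref{Thm:LessSus} with $E_s = E_{opt}$).
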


The statement follows immediately from Lemma~\ref{Thm:LessSus} and Theorem~\ref{Thm:Stable},
while Lemma~\ref{Thm:LessSus} and Theorem~\ref{Thm:Stable} imply

\begin{corollary}
Let $k\geq 2$, and $rT < - 2 \ln\bigg(1 - 2 \cos\bigg(\frac{k\pi}{2k + 1}\bigg) \bigg)$. Then for any $E\in(0,E_{opt}]$ the yield is sustainable.
\end{corollary}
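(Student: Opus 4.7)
The plan is to deduce this corollary as a direct two-step combination of Theorem~\ref{Thm:Stable} with Lemma~\ref{Thm:LessSus}, exactly as the paper indicates in its preamble to the statement. The underlying observation is that the hypothesis on $rT$ is precisely the condition under which the maximal yield itself is known to be sustainable, and Lemma~\ref{Thm:LessSus} provides a monotonicity-in-effort principle that propagates sustainability from the maximal effort down to every smaller effort.

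More concretely, I would first invoke Theorem~\ref{Thm:Stable} with the specific hypothesis $rT < -2\ln(1 - 2\cos(k\pi/(2k+1)))$ to conclude that for $k \geq 2$, the choice $E = E_{opt} = 1 - e^{-rT/2}$ produces a positive periodic solution $N^*(t)$ satisfying \eqref{eq:optsol} that is locally asymptotically stable, so its associated yield, given by \eqref{yield}, is sustainable. In the terminology of Lemma~\ref{Thm:LessSus}, this says that the choice $E_s := E_{opt}$ is a value in $(0, E_{opt}]$ for which the associated yield is sustainable.

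Next I would apply Lemma~\ref{Thm:LessSus} with this $E_s = E_{opt}$. The conclusion of that lemma is that for every $E \in (0, E_s] = (0, E_{opt}]$, the associated yield from \eqref{yield} is sustainable. This is exactly the desired statement.

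There is essentially no technical obstacle here, since both required ingredients are already proved; the only book-keeping point worth a brief sentence in the write-up is that the existence condition \eqref{3:LessHalfStable} for the positive equilibrium, needed when applying Lemma~\ref{Thm:LessSus}, is automatic under our hypothesis: $E \leq E_{opt} = 1 - e^{-rT/2}$ trivially implies $E < 1 - e^{-rT}$. Thus the proof collapses to two sentences: Theorem~\ref{Thm:Stable} gives sustainability at $E_{opt}$; Lemma~\ref{Thm:LessSus} then spreads it over the whole interval $(0, E_{opt}]$.
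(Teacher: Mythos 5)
Your proposal is correct and follows exactly the paper's own derivation: the paper states that the corollary follows immediately from Theorem~\ref{Thm:Stable} (sustainability of the yield at $E=E_{opt}$ under the stated bound on $rT$) combined with Lemma~\ref{Thm:LessSus} (propagation of sustainability to all $E\in(0,E_{opt}]$). Your extra remark that $E\leq E_{opt}$ guarantees \eqref{3:LessHalfStable} is a harmless and accurate bit of book-keeping.
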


Theorem~\ref{Thm:MSYG_opt} determines the maximum bound for a sustainable yield.

\begin{theorem}
\label{Thm:MSYG_opt}
Let $k \geq 2$, and $rT \geq - 2 \ln\bigg(1 - 2 \cos\bigg(\frac{k\pi}{2k + 1}\bigg) \bigg)$. 
Then there exist $E^{**}\in(0,E_{opt}]$ and $\theta^* \in \left( 0, \frac{\pi}{k+1} \right)$ such that $(E^{**},\theta^*)$ is a unique solution of 
\begin{equation}
\label{add_remark}
\cos(\theta) = \frac{(1+E) e^{-rT}}{2 (1-E)}+ \frac{(1-E)^2 e^{rT}}{2}, 
\quad
\frac{\sin(k\theta)}{\sin((k+1)\theta)} = (1-E)^2 e^{rT}, \quad \theta \in \left( 0, \frac{\pi}{k+1} \right).
\end{equation}
For any $E\in(0,E^{**})$, the yield is sustainable, while for $E\in [E^{**},E_{opt}]$ the yield is not sustainable.
\end{theorem}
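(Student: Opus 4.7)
The plan is to realize $E^{**}$ as the unique zero in $(0,E_{opt}]$ of the scalar function encoding the sharp stability boundary given in Lemma \ref{Cor:AnyE}. On $(0,E_{opt}]$, the first inequality in \eqref{spec_stab_cond} holds automatically, since $e^{-rT}/(1-E)^2 \leq 1 < (k+1)/k$; hence sustainability of the yield at effort $E$ is equivalent to the strict inequality $h_1(E) > \cos(\theta(E))$, where $h_1$ is the function defined in \eqref{def_h1} and $\theta(E) \in (0,\pi/(k+1))$ is the unique root of \eqref{theta_1}. I would set $g(E) := h_1(E) - \cos(\theta(E))$; any zero of $g$ in $(0,E_{opt}]$ produces, together with $\theta^{*} := \theta(E^{**})$, a solution of the system \eqref{add_remark} by construction.

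For monotonicity I would reuse the computations carried out inside the proof of Lemma \ref{Thm:LessSus}: on $(0,E_{opt}]$, $h_1$ is strictly decreasing and $\theta(E)$ is strictly decreasing (with values in $(0,\pi/2)$), so $\cos(\theta(E))$ is strictly increasing, and hence $g$ is continuous and strictly decreasing on $(0,E_{opt}]$. For the endpoint values, Theorem \ref{Thm:MSYG} supplies an effort $E^{*} \in (0,E_{opt})$ at which sustainability holds, so $g(E^{*}) > 0$. At $E = E_{opt}$, the hypothesis $rT \geq -2\ln(1-2\cos(k\pi/(2k+1)))$ combined with Theorem \ref{Thm:Stable} says that MY is not sustainable, so the criterion of Lemma \ref{Cor:AnyE} must fail at $E_{opt}$; since the first inequality of \eqref{spec_stab_cond} still holds there, we must have $g(E_{opt}) \leq 0$. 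The Intermediate Value Theorem then yields a unique $E^{**} \in (E^{*},E_{opt}]$ with $g(E^{**}) = 0$, which is precisely the first equation of \eqref{add_remark}, while the second is satisfied by definition of $\theta(E^{**})$.

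The sustainability dichotomy follows immediately from strict monotonicity of $g$: for $E \in (0,E^{**})$ we have $g(E) > 0$, so both inequalities in \eqref{spec_stab_cond} hold strictly and the yield is sustainable by Lemmata \ref{Cor:AnyE} and \ref{connection}; for $E \in [E^{**},E_{opt}]$ we have $g(E) \leq 0$, the sharp criterion fails, and the periodic solution is not locally asymptotically stable. The step I expect to be the most delicate is the verification that $g(E_{opt}) \leq 0$: substituting $E_{opt} = 1-e^{-rT/2}$ into \eqref{theta_1} reduces it to $\sin(k\theta) = \sin((k+1)\theta)$, forcing $\theta(E_{opt}) = \pi/(2k+1)$, and the resulting inequality $h_1(E_{opt}) \leq \cos(\pi/(2k+1))$ must be matched against the stability criterion of Lemma \ref{Lem:Generalk} already used in Theorem \ref{Thm:Stable}, so that the two sharp criteria agree on the critical value of $rT$.
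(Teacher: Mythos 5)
Your proposal is correct and follows essentially the same route as the paper's proof: both reduce sustainability on $(0,E_{opt}]$ to the single scalar inequality $h_1(E)>\cos(\theta(E))$, reuse the monotonicity of $h_1$ and of $\cos(\theta(E))$ established in the proof of Lemma~\ref{Thm:LessSus}, obtain $E^{**}$ by the Intermediate Value Theorem between a sustainable left endpoint and the unsustainable value $E_{opt}$ (via Theorem~\ref{Thm:Stable}), and then read off the dichotomy from strict monotonicity together with Lemma~\ref{Thm:LessSus}. The only differences are cosmetic: the paper takes the left endpoint as $E\to 0^+$ (where $h_1(0^+)=\cosh(rT)>1\geq\cos\theta$) rather than citing Theorem~\ref{Thm:MSYG}, and your explicit check that at $E_{opt}$ one has $\theta(E_{opt})=\pi/(2k+1)$ so that the criterion of Lemma~\ref{Cor:AnyE} collapses to that of Lemma~\ref{Lem:Generalk} is a welcome verification the paper leaves implicit.
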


\begin{proof}
Let us prove that $E^{**}$ exists and is unique. By Theorem~\ref{Thm:Stable}, 
when $k\geq 2$, $rT \geq - 2 \ln\bigg(1 - 2 \cos\bigg(\frac{k\pi}{2k + 1}\bigg) \bigg)$ and $E=E_{opt}$, the associated solution is unstable and $\cos(\theta(E_{opt})) \geq h_1(E_{opt})$. On the other hand, when $E\to 0^+$ 
we get in \eqref{spec_stab_cond}, $\displaystyle \cos(\theta(0^+)) = \frac{e^{-rT}}{2}+ \frac{e^{rT}}{2}<\cosh(rT) = h_1(0^+)$. 

As $h_2(\theta(E))$ defined in \eqref{def_h2} is continuous and monotone decreasing in $E$, $E(\theta)$ is continuous and monotone for $\theta \in I = (0,\pi/(k+1))$, and even on $[0,\pi/(k+1))$, if we define $h_1(0)=k/(k+1)$. The inverse function is also monotone and continuous for $E \in [0,E_{opt}]$, as well as $\cos(\theta(E))$.
When $E \to 0^+$, $\cos(\theta(0^+)) < h_1(0^+)$ and by assumption at the optimal harvesting effort $\cos(\theta(E_{opt})) \geq h_1(E_{opt})$. By the continuity of $\cos(\theta(E))$ and the Intermediate Value Theorem, there exists a solution $(\theta(E^{**}),E^{**})$ of \eqref{add_remark} with $E^{**}\in(0,E_{opt}]$. Moreover, by monotonicity this value is unique. 

Since by Lemma \ref{Lem:yield} the yield $Y(E)$ is increasing for $E\in(0,E_{opt}]$, the value of $Y(E^{**})$ is an upper bound, it does not satisfy \eqref{spec_stab_cond} and thus is not sustainable.

However for $E = (E^{**})^- = \lim_{E\to (E^{**})^-} E$, by the argument that as $E$ decreases, $\cos(\theta(E))$ decreases and $h_1(E)$ increases, $(E^{**})^-$ will satisfy the conditions in \eqref{spec_stab_cond}, and the associated yield is sustainable. Then by Lemma~\ref{Thm:LessSus}, for any $E\in(0,E^{**})$ the associated yield is sustainable.
\end{proof}

\section{Numerical Simulations and Discussion}
\label{sec:concl}

Let us illustrate sustainability of the optimal yield with simulations.


\begin{figure}[ht]
\label{Fig:SvsU}
\centering
\includegraphics[width=0.48\linewidth]{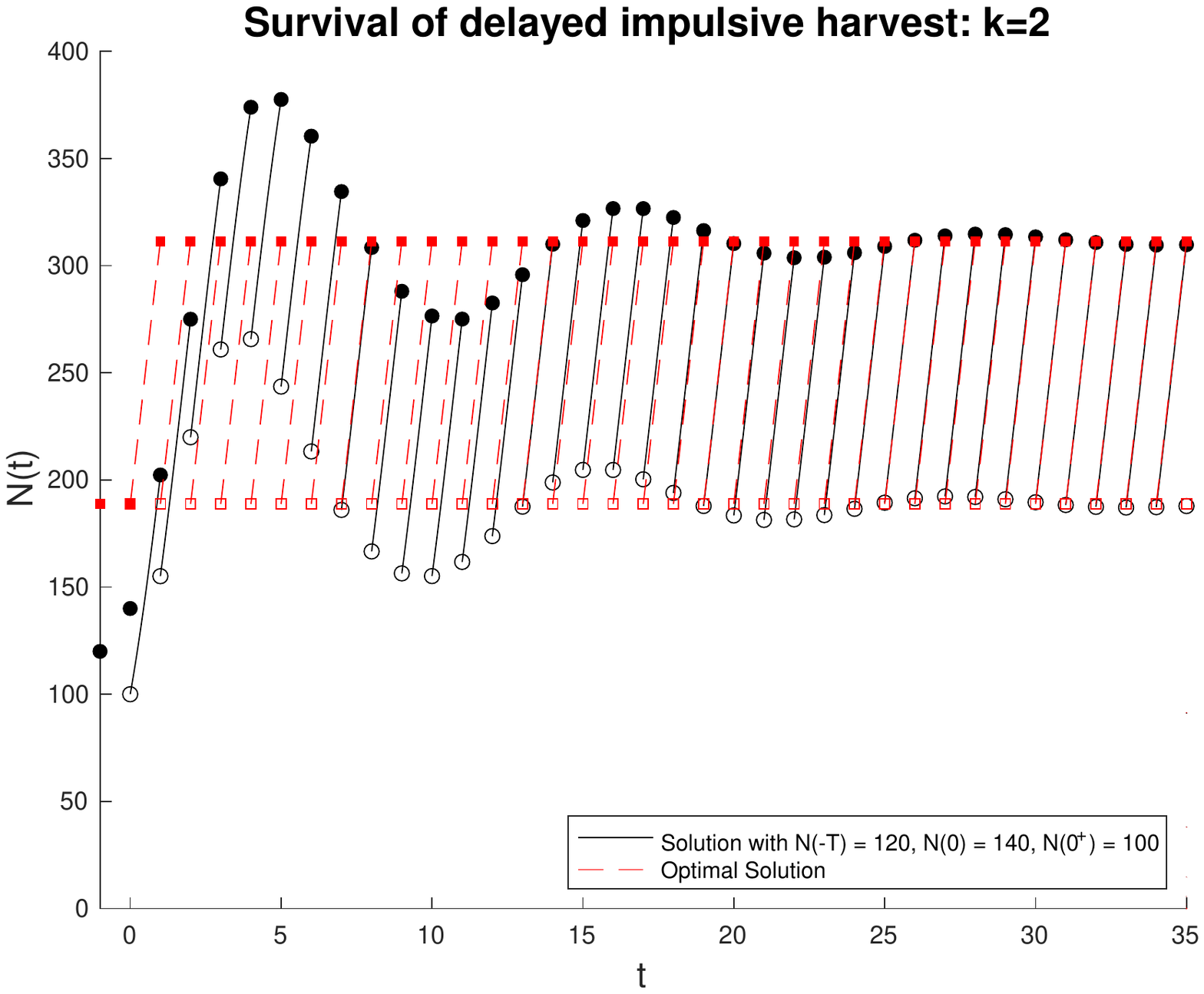}~
\includegraphics[width=0.48\linewidth]{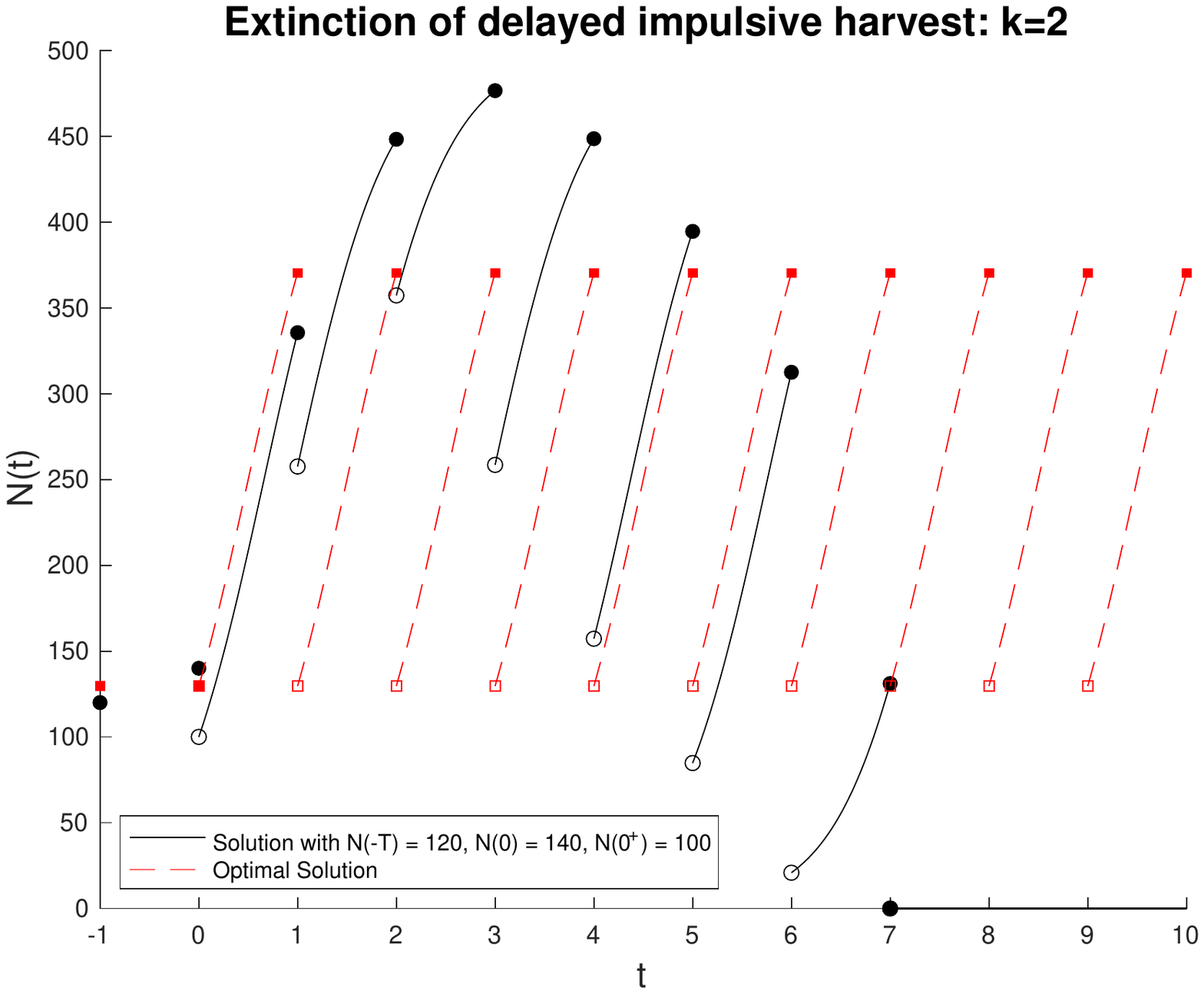}
\caption{Solutions to \eqref{Impulse:Delay} with $k=2$, $K_c = 500$, and $E = E_{opt} = 1 - e^{-rT/2}$. The figure on the left shows the optimal solution to \eqref{Impulse:Delay} (red), and the solution to \eqref{Impulse:Delay} with $r=1$, $T=1$ and initial conditions $N(-T) = 120$, $N(0) = 140$, $N(0^+) = 100$ (black). Since $0< rT < 1.9248$ the optimal solution is locally asymptotically stable.  The figure on the right shows the optimal solution to \eqref{Impulse:Delay} (red), and the solution to \eqref{Impulse:Delay} with $r=2.1$, $T=1$ and initial conditions $N(-T) = 120$, $N(0) = 140$, $N(0^+) = 100$ (black). Since $rT > 1.9248$ the optimal solution is unstable.}
\label{figure1}
\end{figure}

In Fig.~\ref{figure1}, stable and unstable solutions of \eqref{Impulse:Delay} corresponding to the optimal yield are compared. We assume that the delay is two impulsive periods corresponding to $k=2$. To investigate stability, solutions were computed until one of the following became true: the relative error of $|N((n+1)T^+)-N(nT^+)|/N(nT^+)$ was consistently less than $10^{-4}$, $t=100T$, or the population went to extinction, i.e. $N(nT^+) = 0$. The carrying capacity was chosen as $K_c = 500$, and the harvesting effort was chosen to be $E = E_{opt} = 1 - e^{-rT/2}$. Since $k=2>1$, Theorem \ref{Thm:Stable} states that the optimal solution is locally asymptotically stable if and only if $0<rT<1.9248$. In both cases the optimal solution is plotted with a dashed red line, while a solution with initial conditions which are different than the optimal solution is shown via a black solid line.

In Fig.~\ref{figure1}, left, $r=1$, $T=1$ and the initial conditions are $N(-T) = 120$, $N(0) = 140$, $N(0^+) = 100$.  Since $0< rT = 1 < 1.9248$, by Theorem \ref{Thm:Stable} the optimal solution is locally asymptotically stable. Thus we observe a quick convergence to the optimal solution, indicating that the population survives. In Fig.~\ref{figure1}, right, $r=2.1$, $T=1$ and the initial conditions are $N(-T) = 120$, $N(0) = 140$, $N(0^+) = 100$. As $rT = 2.1 > 1.9248$, by Theorem \ref{Thm:Stable} the optimal solution is unstable. Although our solution begins close to the optimal solution due to the imposed initial conditions, we observe the population oscillating slightly before going to extinction at $t=7$.



\begin{figure}[ht]
\centering
\label{Fig:OptDots}
\includegraphics[width=0.5\linewidth]{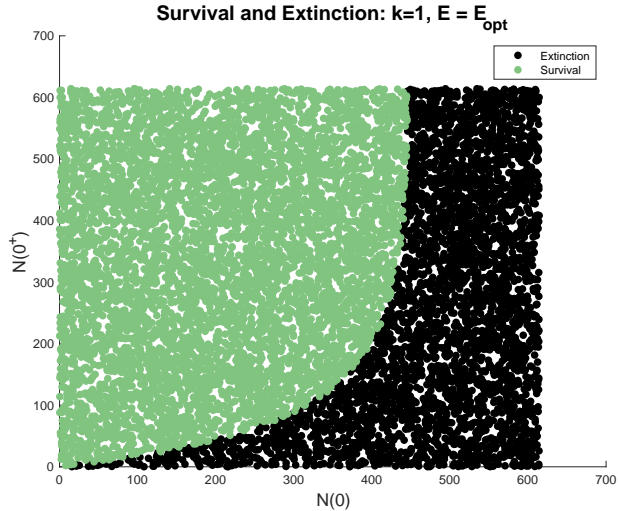}
\caption{When $k=1$, $r=1.3747$, $T=1$, $K_c = 307.1609$, $E = 1 - e^{-(1.3747)(1)/2} = 0.4971$ the positive equilibrium solution with 
$N^*(nT^+) \approx 102.7816$ is locally asymptotically stable. The dots in this figure show whether or not the population survives or goes extinct with the given initial conditions.}
\label{figure2}
\end{figure}

One thing that we wish to highlight is that local stability of the positive periodic solution does not guarantee population survival for all possible positive initial conditions, as Fig.~\ref{figure2} illustrates. In this figure, each dot represents a solution to \eqref{Impulse:Delay} ($k=1$) with a set of initial values $N(0)$ and $N(0^+)$.  The parameters are $k=1$, $r=1.3747$, $T=1$, $K_c = 307.1609$, $E = E_{opt} = 1 - e^{-(1.3747)(1)/2} = 0.4971$ and thus by Theorem \ref{Thm:Stable} (see also Theorem \ref{Thm:PerSolK1}) the positive periodic solution with 
$N^*(nT^+) \approx 102.7816$ is locally asymptotically stable. Fig.~\ref{figure2} tested the global stability of the positive periodic solution by investigating whether or not different combinations of initial conditions would result in survival of the population.
For each solution, both $N(0)$ and $N(0^+)$ were chosen from a uniform distribution of numbers in $[0,2 K_c] = [0,614.3218]$. 
If after $10 500$ iterations the solution $N(nT^+)$ stayed positive (in fact, within $[N^*(nT^+)-10,N^*(nT^+)+10]$), then we say that the population has survived and the population is depicted by a green dot on Fig.~\ref{figure2}. If at any point within the $10 500$ iterations the size of the population after harvesting was less than $10^{-3}$ then the population was said to have gone to extinction, and the population was depicted by a black dot. 

Since the optimal positive periodic solution with $N^*(nT^+) \approx 102.7816$ is locally asymptotically stable, it is natural that we observe the population surviving for wide ranges of initial values. However, if $N(0)$ is much larger than $N(0^+)$ or if $N(0)$ is just generally very large, the population does not survive and goes to extinction. This highlights the lack of global attractivity of the optimal solution which is locally asymptotically stable for all $rT>0$.

We proved that asymptotic stability of the difference equation which corresponds to the optimal solution of impulsive model \eqref{Impulse:Delay} is $k$-dependent. In fact, we can show that as $k\to \infty$, the range of possible $rT$ values that allow a locally asymptotically stable equilibrium shrinks.

\begin{figure}[ht]
\centering
\includegraphics[width=0.5\linewidth]{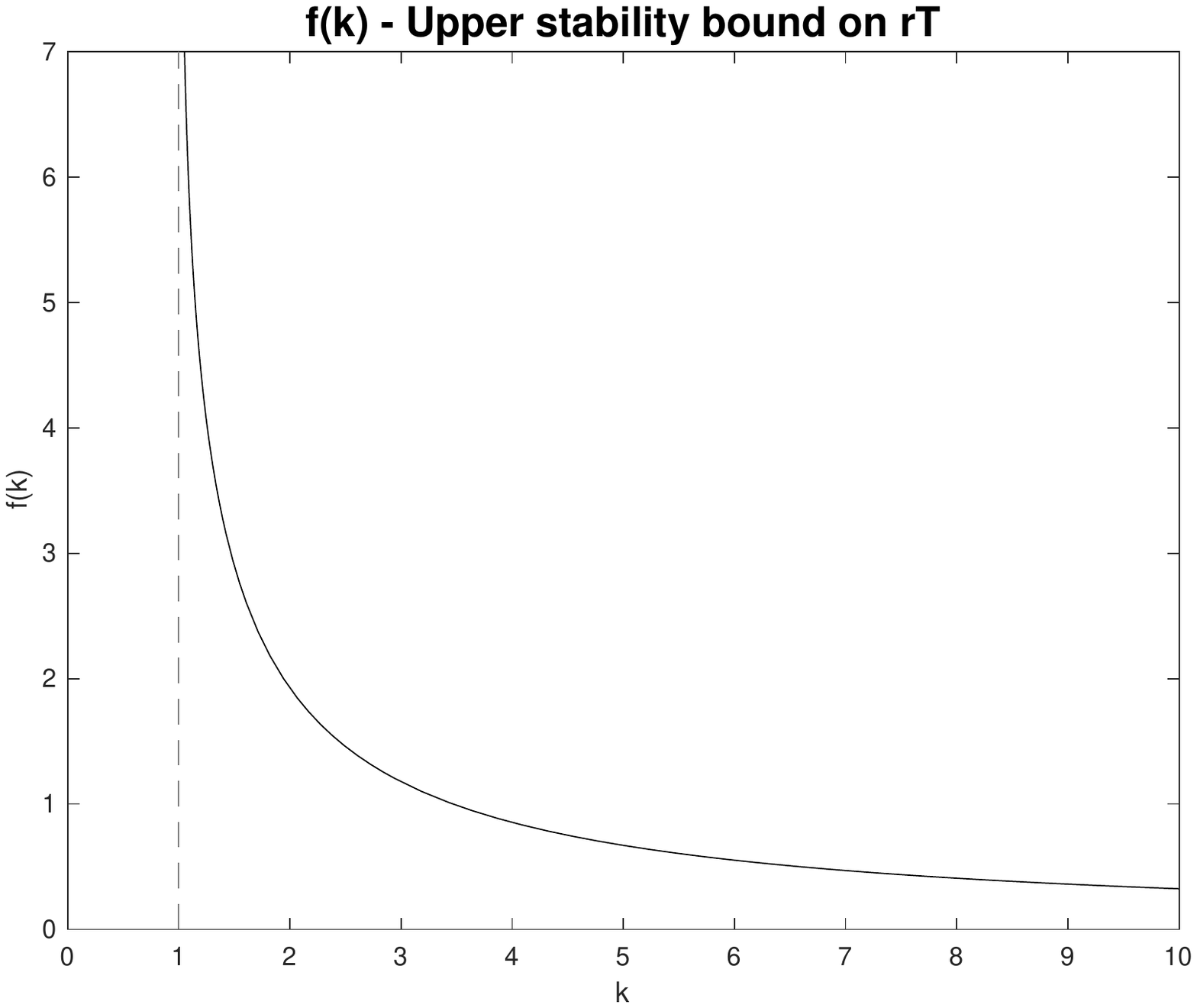}
\caption{
This function \protect{\eqref{rT_bound_1}} is monotonically decreasing since
$f'(k)  < 0$ for $k>1$, and $\displaystyle \lim_{k\to \infty} f(k) = 0.$
}
\label{figure3}
\end{figure}

Fig.~\ref{figure3} illustrates that as the delay $k$ increases, the range of possible $rT$ values which allow for optimal sustainable harvesting becomes smaller.
In the figure, the upper bound on sustainable $rT$ values 
\begin{equation*}
\label{rT_bound_1}
f(k) = - 2 \ln\bigg( 1 - 2 \cos\bigg(\frac{\pi k}{2k + 1} \bigg)\bigg),
\end{equation*}
is plotted and it is easy to see that as $k$ increases, $f(k)\to 0$. This parallels the theory of continuous delayed harvesting where a growing delay exhibits a destabilizing effect on the model.

The results of the paper can be summarized as follows:
\begin{enumerate}
\item
With delayed impulsive harvesting, we cannot guarantee positivity of a solution with non-negative and non-trivial initial conditions, moreover, extinction in finite time is possible.
\item
The delay does not influence the maximum yield but can significantly influence its sustainability.
\item
With sharp local stability conditions, the optimal solution associated with the maximum yield is locally asymptotically stable in both non-delay \cite{Zhang2003} and one-step-delay cases. 
For longer delays, there are bounds on $rT$ to attain MSY: the longer the delay is, the more frequent impulses should be. 
\end{enumerate}

For the choice of optimal harvesting, the increasing value of the intrinsic growth rate has negative influence on sustainability (see Figs.~\ref{figure1} and \ref{figure3}). This is well aligned with the enrichment paradox when increased productivity can lead to higher extinction risks.

Let us discuss time delays in the impulsive system. The real delay in information is $kT$ corresponding to $k$ cycles with $T$-time duration of each cycle. Assuming $r=1$ and applying the bound for $rT=T$ in \eqref{Cond:General}, we 
evaluate the delay $kT$. We get using L'H\^{o}spital's rule,
$$
\lim_{k \to +\infty} \bigg[ -2 k \ln\bigg( 1 - 2 \cos\bigg(\frac{\pi k}{2k + 1} \bigg)\bigg) \bigg] = \pi.
$$
Since $kf(k)$, with $f$ plotted in Fig.~\ref{figure3}, is monotone decreasing with the lower bound of $\pi$,
the delay $kT$ not exceeding $\pi$ (generally, $\pi/r$) will lead to MSY for any $k$. Note that $rT<\pi$, where $T$ is the delay, is a sharp stability constant in the autonomous continuous delay case.

This research extends the classical continuous harvesting models of Clark \cite{Clark1990} as well as impulsive harvesting models with no delay \cite{Mamdani2008,Zhang2003} which were generalized to impulsive harvesting with delay, outlining new instability effects. 
The  models closest to those considered in the present paper were explored in \cite{Church2017,Pei2006,Zhang2003}
and to some extent \cite{Mamdani2008}. In particular, optimal harvesting was discussed in \cite{Mamdani2008,Zhang2003}, with the same MSY being derived as in the present paper, however in the absence of the delay in the impulsive part, the MSY was unconditional. Delayed impulsive harvesting for the logistic equation and asymptotic stability of its positive periodic solutions was explored in \cite{Church2017,Pei2006}. Compared to the previous work, the main points of similarity and difference can be outlined as follows.
\begin{enumerate}
\item
We use the method of reducing an impulsive model to a difference equation, which was extensively applied in \cite{Mamdani2008,Zhang2003} 
and also used in \cite{Church2017}. However, the paper \cite{Church2017} avoids  high order difference equations by instead reducing an equation with delayed impulses to a non-delay system. 
The main technique applied to study impulsive models is Lyapunov-Razumikhin, leading to various global stability results.
But
the system considered in this paper is non-smooth (truncated) in the sense that the solution is assumed to be identically zero, once $N(t)=0$ for some $t$, 
and we are not aware of this method applied to such models.  
\item
In both \cite{Church2017,Pei2006}, the impulse delay is within one-cycle period i.e. less than or equal to $T^-$.
If the equation is non-delayed, and we use as a reference value the same $T$-period, this value at $t=(jT)^-$
can be expressed as $\beta(r,\tau)x((jT)^-)$, where $\beta$ is explicitly computed and dependent on the delay $\tau$, the intrinsic growth rate $r$ and, generally, on the growth law (logistic here). Then the situation is to some extent reduced to non-delayed impulses when local and global stability coincide. 
If the delay is equal to the impulse period $T=T^+$, collapse after a harvesting event is possible when the positive periodic solution is locally stable, see Fig.~\ref{figure2}.
To the best of our knowledge, the present paper is the first to report extinction in finite time and discrepancy between local and global stability for impulsively harvested equations. For applied models, this could possibly explain population collapses. On the other hand, \cite{Church2017} explored subtler properties, such as cycles and bifurcations.
\end{enumerate}

Let us dwell on the extension of this research, its modifications and generalizations.
It would be advantageous to get sufficient conditions on the initial values guaranteeing existence of a positive solution and to describe global asymptotic stability for such initial conditions. Note that this problem cannot be solved with modifying the per capita growth rate in the equation.  Even for the Gompertz growth model which is sustainable for any level of harvesting,
delayed impulsive harvesting can cause immediate extinction, once the stock decays quickly and is significantly overestimated during the harvesting event.

The approach of \cite{Mamdani2008} where a certain deduction is incorporated in each harvesting event, not contributing to the yield, can also be combined with delayed impulsive harvesting. It is expected that the maximum yield in this case will not change when impulses are delayed, but investigating its local or global attractivity is still an open question. The current research can be placed in the context of $T$-periodic or almost periodic $T(s)$.

Another minor extension will be including delays in the continuous part, for example,
switching from the logistic to the Hutchinson equation
$$
\frac{dN}{dt} = r N(t) \bigg(1 - \frac{N(t-jT)}{K_c} \bigg)
$$
for some $j \in {\mathbb N}$, with the same impulsive conditions. 
Here again, the maximum yield will not change, but an additional delay is expected also to contribute to instability of the model.

Including a delay within impulsive harvesting allowed us to observe 
quite realistic effects, such as population collapse even when the model parameters predicted local stability of the unique positive periodic solution.

Certainly incorporating impulsive delayed harvesting in non-autonomous or structured population models will lead to richer dynamics, in particular, considering
\begin{itemize}
\item
non-autonomous models with variable parameters, in particular, considering periodic and almost periodic solutions (similarly to \cite{Yao}),
stability, bifurcations, as well as optimal harvesting policies and their sustainability;
\item
predator-prey systems where either one of the species or both are subject to harvesting, structured and Lotka-Volterra models which - without delay in impulsive harvesting or an impulsive system with continuous harvesting - 
are well-studied areas, see, for example, \cite{Kar2003,Jiao2019,Liu2020,Meng2016,Quan2021,Sharma2022,xiao_2015,Wang2021,Wang2022,Zhang2020};
\item
fractional derivative models and spatially distributed populations, as well as systems within fractional settings to describe spatial interactions.    
\end{itemize}

\section*{Acknowledgment}

The authors are grateful to the anonymous reviewer whose thoughtful comments significantly contributed to the quality of presentation.
The authors also acknowledge the support of NSERC, the grant RGPIN-2020-03934.

\end{document}